\documentclass[11pt]{amsart}

\usepackage [PS]{diagrams}

\newcommand\A{\mathfrak{A}}
\newcommand\B{\mathfrak{B}}
\newcommand\C{\mathcal{C}}
\newcommand\D{\mathcal{D}}
\newcommand\I{\mathcal{I}}
\newcommand\J{\mathcal{J}}

\newcommand\nn{\mathbb{N}}
\newcommand\zz{\mathbb{Z}}
\newcommand\cc{\mathbb{C}}

\newcommand{\LL}{\mathbf{L}}
\newcommand{\RR}{\mathbf{R}}
\newcommand{\shv}{\mathfrak{Shv}\,}
\newcommand{\hshv}{\mathfrak{hShv}\,}
\newcommand{\Ho}{\mathrm{Ho}\,}
\newcommand{\Hof}{\mathrm{Ho}_{\mathrm{f}}\,}

\newcommand{\id}{\mathrm{id}}
\newcommand{\Fun}{\mathrm{Fun}\,}
\newcommand{\pr}{\mathrm{pr}}

\newcommand{\Gr}{\widetilde{\hbox {\it Gr}}\,}
\newcommand{\Mod}{\mathrm{Mod}}
\newcommand{\Rng}{\mathrm{Rng}}
\newcommand{\ppp}{\mathbf {pP}}
\newcommand{\weq}{weak equivalence}
\newcommand{\weqs}{weak equivalences}

\newcommand{\op}{\mathrm{op}}

\newcommand{\ie}{{\it i.e.}}
\newcommand{\resp}{{\it resp.}}

\newcommand{\iso}{\cong}

\newcommand{\Top}{\mathbf{Top}}
\newcommand{\DTop}{\hbox{\nobreakdash-}\Top}
\newcommand{\Tw}{\mathbf{Tw}}

\newcommand{\smashover}[1]{\mathop{\wedge}_{#1}}

\newcommand{\invlim}{\lim\limits_{\longleftarrow}\,}
\newcommand{\dirlim}{\lim\limits_{\longrightarrow}\,}

\def\po#1,#2,#3{{#1} \cup_{#2} {#3}}
\def\pb#1,#2,#3{{#1} \times_{#2} {#3}}


\newarrowmiddle{circle}{\circ}{\circ}{\circ}{\circ}
\newarrow{Nonsense}--{circle}->

\newarrow{Equal}=====
\newarrow {Implies}===={=>}
\newarrow {Fib} ----{>>}
\newarrow {Cof} >--->

\newcommand{\ra}{\rTo\relax}
\newcommand{\lra}{\rTo}
\newcommand{\la}{\lTo\relax}
\newcommand{\lla}{\lTo}
\newcommand{\da}{\dTo}

\theoremstyle{plain}
\newtheorem{lemma}{Lemma}[subsection]
\newtheorem{proposition}[lemma]{Proposition}
\newtheorem{theorem}[lemma]{Theorem}
\newtheorem{corollary}[lemma]{Corollary}

\theoremstyle{definition}
\newtheorem{definition}[lemma]{Definition}
\newtheorem{remark}[lemma]{Remark}

\newtheorem{example}[lemma]{Example}

\begin{document}

\title{Twisted diagrams and homotopy sheaves}

\author{Thomas H{\"u}ttemann}

\address{Thomas H{\"u}ttemann\\Queen's University Belfast\\Pure
  Mathematics Research Centre\\Belfast BT7~1NN\\Northern Ireland, UK}

\email{t.huettemann@qub.ac.uk}

\urladdr{http://huettemann.110mb.com/}

\author{Oliver R{\"o}ndigs}

\address{Oliver R{\"o}ndigs, Universit{\"a}t Osnabr{\"u}ck,
Fachbereich Mathe\-matik/In\-for\-ma\-tik,
Albrechtstr.~28a,
D--49069 Osnabr{\"u}ck,
Germany}

\email{oroendig@mathematik.uni-osnabrueck.de}

\urladdr{http://www.mathematik.uni-osnabrueck.de/staff/phpages/rndigso.rdf.html}

\begin{abstract} Twisted diagrams are generalised diagrams: The
  vertices are allowed to live in different categories, and the
  structure maps act through specified ``twisting'' functors between
  these categories. Examples include spectra (in the sense of homotopy
  theory) and quasi-coherent sheaves of modules on an algebraic
  variety. We construct a twisted version of \textsc{Kan} extensions
  and establish various model category structures (with pointwise weak
  equivalences). Using these, we propose a definition of ``homotopy
  sheaves'' and show that a twisted diagram is a homotopy sheaf if and
  only if it gives rise to a ``sheaf in the homotopy category''.
  \hfill(\today)
\end{abstract}

\maketitle

\section{Introduction}

One often encounters constructions which pretend to be diagrams in
some category but cannot quite be described with that formalism.
An important example is the notion of (na\"\i ve) spectra, a sequence of
pointed spaces $ X_0,\ X_1,\ \ldots $ and structure maps
$ \Sigma X_n \ra X_{n+1} $. This almost determines a diagram indexed
over~$\nn$ (regarded as a category), and in fact can be described by a
``twisted diagram'' with ``twists'' given by iterated suspension functors.
Another example (and the origin of the present paper) is the category of
quasi-coherent sheaves on projective spaces as defined by the first
author in~\cite{Hu1}: a ``sheaf'' is a collection of equivariant spaces,
each equipped with an action of a different monoid, together with
structure maps which are equivariant with respect to the ``smaller''
monoid. A detailed description is contained in the examples in this paper.%
---The new formalism also applies, as a special case, to diagram categories
in the usual sense (\ie, functor categories).

To illustrate the general idea, suppose we have two categories~$\C$ and~$\D$
and a functor $ F \colon \C \ra \D $ which has a right adjoint~$U$.
A twisted diagram (with respect to this data) is a morphism (in~$\D$)
\[F(Y) \lra^{f^\sharp} Z\]
where~$Y$ is an object of~$\C$ and $Z$~is an object of~$\D$. This gadget
should be thought of as a generalised diagram of the form $ Y \rNonsense^f Z $.
Since~$Y$ and~$Z$ live in different categories, the ``structure map''~$f$
has to act by a ``twist'' given by~$F$.

The twisted diagram $Y \rNonsense Z$ is a {\it strict sheaf\/} if its
structure map $F(Y) \rTo Z$ is an isomorphism. The terminology is
motivated by the description of quasi-coherent sheaves on a
quasi-compact scheme by twisted diagrams of modules: The category of
quasi-coherent sheaves is equivalent to the full subcategory of those
twisted diagrams which are strict sheaves;
cf.~Example~\ref{example:quasi_coh_toric}.---Provided the
categories~$\C$ and~$\D$ are equipped with compatible model structure,
we can define the twisted diagram to be a {\it homotopy sheaf\/} if
the structure map $F(Y) \rTo Z$ is a weak equivalence (the technical
definition given in~\ref{def:homotopy_sheaves} involves a cofibrant
replacement of the source). The rough idea is that, by passing to the
homotopy class of the structure map, we obtain a twisted diagram
involving the homotopy categories of~$\C$ and~$\D$ only, and that $Y
\rNonsense Z$ is a homotopy sheaf if and only if it is mapped to a
strict sheaf in the homotopy categories. However, there are technical
issues which make this process of ``passing to the homotopy
categories'' slightly more complicated than one would wish; these
issues are addressed in~\S4.

Homotopy sheaves of the kind described here appeared in the context of
the algebraic $K$-theory of spaces \cite{HKVWW, Hu1, Hu2,
  Hu3}. However, none of these papers addressed the question of the
exact relationship between sheaves and homotopy sheaves, and the
present paper is intended to provide clarification of this point (see
the remarks at the very end of this paper). In a forthcoming paper, it
will be shown that homotopy sheaves can be used to describe the
derived category, in the usual sense, of toric varieties, and that
homotopy sheaves can be characterised as colocal objects in a twisted
diagram category~\cite{Hu4}.

\goodbreak

To make this rather abstract paper accessible to a broad audience, we
include a quite detailed discussion of elementary topics, in
particular in the foundational material. This includes completeness of
the category of twisted diagrams and the construction of {\author Kan}
extensions.  Apart from some elementary category theory~\cite{ML} and
and basic model category theory \cite{DS, Ho} no prerequisites are
required. While the model structures are well-known among experts in
homotopy theory, it seems that there has been no accessible published
account so far. We hope to fill the gap with the present paper.

{\bf Organisation of the paper.} The paper is divided into three
parts. \S\ref{sec:foundations} is devoted to the definition of twisted
diagrams and the development of basic machinery. The fundamental
notion is that of an {\it adjunction bundle}, consisting of a
collection of categories and adjoint functor pairs.  It encodes the
shape of the diagrams and carries all the necessary information about
twists. We discuss the behaviour of twisted diagrams with respect to
morphisms of adjunction bundles and prove a convenient criterion for
completeness. In \S\ref{subsec:kan} we construct a twisted version of
{\author Kan\/} extensions. Section~\ref{subsec:construction} includes
a different description of twisted diagrams and shows how to construct
important examples of adjunction bundles.

In \S\ref{sec:model-structures} we prove the existence of several
{\author Quillen\/} closed model category structures on categories of
twisted diagrams. This part is based on model category structures for
diagram categories as in~\cite{Ho}.  In more detail, we consider
``pointwise'' weak equivalences.  Depending on properties of the
adjunction bundle (the index category is required to be a ``direct''
or ``inverse'' category), we establish {\author Reedy\/}-type model
structures using (generalised) latching or matching spaces.  If the
adjunction bundle consists of cofibrantly generated model categories,
we construct (for arbitrary index categories) a cofibrantly generated
model structure.

Finally, in~\S\ref{sec:sheaves} we propose definitions of sheaves and
homotopy sheaves. Starting from an adjunction bundle of model
categories we construct an associated bundle of homotopy categories. A
twisted diagram over the original adjunction bundle gives rise to a
twisted diagram over the homotopy bundle, and the former is a homotopy
sheaf if and only if the latter is a sheaf.

A special case of the results on model structures has been used by the
first author to study the algebraic $K$-theory of projective toric
varieties \cite{Hu1, Hu3}. Homotopy sheaves are important to control
finiteness of homotopy colimits of infinite $CW$
complexes~\cite{Hu2}. Twisted diagrams and their model structures also
appear implicitly in~\cite{HKVWW}.

\section {Foundations}
\label{sec:foundations}

\subsection {Adjunction Bundles}

Let~$\I$ be a small category. It will serve as the index category for our diagrams.

\begin{definition}
  \label{DefAdjBundle}
  An {\it adjunction bundle~$\B = (\C,\ F,\ U)$ over~$\I$\/}, or
  $\I$-bundle, consists of the following data:
  \begin{itemize}
  \item for each object $i \in \I$ a category $\C_i$,
  \item for each morphism $\sigma \colon i \ra j$ in~$\I$ a pair of
    adjoint functors
    \[F_\sigma \colon \C_i \ra C_j \qquad \hbox{and} \qquad U_\sigma
    \colon \C_j \ra \C_i\]
    (with $F_\sigma$ being the left adjoint),
  \end{itemize}
  such that~$U$ determines a functor $\I^\op \ra Cat$, \ie, $U_{\id_i}
  = \id_{\C_i}$, and for each pair of composable arrows $i \lra^\sigma
  j \lra^\tau k$, the equality $U_{\tau \circ \sigma} = U_\sigma \circ
  U_\tau$ holds. In addition, we require $F_{\id_i} = \id_{\C_i}$. The
  properties of adjunctions guarantee that there is a canonical
  isomorphism $F_{\tau \circ \sigma} \iso F_\tau \circ F_\sigma$
  (which will be referred to as {\it uniqueness isomorphism\/}), since
  both functors are left adjoint to $U_{\tau \circ \sigma} = U_\sigma
  \circ U_\tau$ (\cite[IV.1, Corollary~1, p.~83]{ML}).
\end{definition}

\begin{example}
  \label{XampleTrivial}
  Any category~$\C$ gives rise to a {\it trivial $\I$-bundle\/} with
  $\C_i = \C$ for all~$i$, and all adjunctions being the identity
  adjunction.
\end{example}

\begin{example}[The non-linear projective line]
  \label{TheProjectiveLine}
  Let $M \DTop$ be the category of pointed topological spaces with a
  basepoint-preserving action of the monoid~$M$.  A monoid
  homomorphism $f\colon M \ra M^\prime$ determines an adjunction
  \[\cdot \smashover {M} M^\prime \colon
  M \DTop \pile{\ra \\ \la} M^\prime \DTop \colon R_f\]
  with~$R_f$ being restriction along~$f$, and~$ \cdot \smashover {M}
  M^\prime$ being its left adjoint (inducing up). The integers~$\zz$
  form a monoid under addition, and we have sub-monoids~$\nn_+$
  (non-negative integers) and~$\nn_-$ (non-positive integers). Hence
  we can form the adjunction bundle $\mathfrak {P}^1$ over $\I = (+
  \lra^{\alpha} 0 \lla^{\beta} -)$, consisting of the categories
  $\nn_+ \DTop$, $\zz \DTop $ and~$\nn_- \DTop$, and the adjoint pairs
  ``inducing up'' and ``restriction'' along the inclusions $\nn_+
  \subseteq \zz$ and $\nn_- \subseteq \zz$.
\end{example}

\subsection{Twisted Diagrams}

\begin{definition}[Twisted diagrams]
  \label{DefTwistedDiag}
  Let~$\B$ be an adjunction bundle over~$\I$.  A {\it twisted
    diagram~$Y$ with coefficients in $\B$\/} consists of the following
  data:
  \begin{itemize}
  \item for each object $i \in \I$ an object $Y_i \in \C_i$,
  \item for each morphism $\sigma \colon i \ra j $ in~$\I$ a map
    $y_\sigma^\flat \colon Y_i \ra U_\sigma (Y_j) $ in~$ \C_i$
  \end{itemize}
  such that $Y$ behaves like a functor, \ie, $y^\flat_{\id_i}
  = \id_{Y_i} $ and $ y_{\tau \circ \sigma}^\flat = U_\sigma
  (y_\tau^\flat) \circ y_\sigma^\flat $ for each pair $ i \lra^\sigma
  j \lra^\tau k $ of composable arrows in $\I$.  (A reformulation
  using the left adjoints will be given below.)
        
  A {\it map $f \colon Y \ra Z$ of twisted diagrams\/} is a
  collection of maps
  \[f_i \colon Y_i \ra Z_i\]
  in $\C_i$, one for each object $i \in \I$, such that for each
  morphism $\sigma \colon i \ra j$ in~$\I$ the equality $U_\sigma
  (f_j) \circ y_\sigma^\flat = z_\sigma^\flat \circ f_i$ holds. (A
  reformulation using the left adjoints will be given below.)  The
  category of twisted diagrams and their maps is denoted $\Tw
  (\I,\B)$.
\end{definition}

For each of the structure maps $y_\sigma^\flat \colon Y_i \ra U_\sigma
(Y_j)$ there is a corresponding adjoint map $y_\sigma^{\sharp} \colon
F_\sigma (Y_i) \ra Y_j$.  The idea is to think of the (meaningless)
symbol $y_\sigma \colon Y_i \rNonsense Y_j$ as a kind of ``structure
map'' having two incarnations as a $\flat$-type map (a morphism
in~$\C_i$) and a $\sharp$-type map (a morphism in~$\C_j$).

The definition of twisted diagrams does not make explicit use of the
left adjoints provided by the adjunction bundle. However, the
properties of adjunctions will play a crucial r\^ole for the
discussion of limits and colimits in~$\Tw (\I, \B)$.

\begin{example}[Spectra]
  \label{Spectra}
  Let~$\nn$ denote the ordered set of natural numbers, considered as a
  category. For each $n \in \nn$, define~$\C_n$ to be the
  category~$\mathcal{S}$ of pointed simplicial sets. If $n\leq m$, we
  have an adjunction $\Sigma^{m-n} \colon \mathcal{S} \pile{\ra \\
    \la} \mathcal{S} \colon \Omega^{m-n}$ of iterated loop space and
  suspension functors. It is clear that this defines an adjunction
  bundle~$Sp$ over~$\nn$. A twisted diagram~$X$ with coefficients
  in~$Sp$, graphically represented by the ``diagram''
  \[X_0 \rNonsense X_1 \rNonsense X_2 \rNonsense \ldots \ ,\]
  is nothing but a spectrum in the sense of \textsc{Bousfield} and
  \textsc{Friedlander}, cf.~\cite{BF}.
\end{example}

\begin{remark}
  \begin{enumerate}
  \item If~$\B$ is a trivial $\I$-bundle
    (Example~\ref{XampleTrivial}), we recover the functor category:
    $\Tw (\I, \B) = \Fun (\I,\ \C)$.
  \item If~$\I$ is a discrete category (\ie, contains no non-identity
    morphisms), an adjunction bundle over~$\I$ is nothing but a
    collection of categories~$\{\C_i\}_{i\in \I}$, and the category of
    twisted diagrams is the product category $\prod_{i\in \I}\C_i$.
  \item Suppose $\B_\nu = (\C^\nu,\ F^\nu,\ U^\nu) $ is a family of
    adjunction bundles indexed by $\I_\nu$. Then we can form the
    following adjunction bundle $\prod_\nu \B_\nu =: \B = (\C,\ F,\
    U)$ indexed by the disjoint union $\I:= \amalg_\nu \I_\nu $: for
    each $i \in \I$ there is exactly one~$ \nu $ with $ i \in \I_\nu$,
    and we define $\C_i = \C_i^\nu$ (and similarly for the~$F$
    and~$U$).  It is easy to see that $\Tw (\I,\B) = \prod_\nu \Tw
    (\I_\nu,\B_\nu)$ in this case.
  \end{enumerate}
\end{remark}

\goodbreak

Given twisted diagrams $Y, Z \in \Tw (\I, \B)$ and maps $ f_i \colon
Y_i \ra Z_i $ in~$\C_i$, we can form two squares for each morphism
$\sigma \colon i \ra j $ in~$ \I$
\[\begin{diagram}
        Y_i & \lra^{f_i} & Z_i \\
        \da<{y_\sigma^\flat} && \da>{z_\sigma^\flat} \\
        U_\sigma (Y_j) & \lra[l>=4em]_{U_\sigma (f_j)} & U_\sigma (Z_j) \\
\end{diagram}
\qquad {\rm and} \qquad
\begin{diagram}
        F_\sigma (Y_i) & \lra[l>=4em]^{F_\sigma (f_i)} & F_\sigma (Z_i) \\
        \da<{y_\sigma^\sharp} && \da>{z_\sigma^\sharp} \\
        Y_j & \lra_{f_j} & Z_j \\
\end{diagram}\]
and the definition of adjunctions imply that the left square commutes
if and only if the right square commutes. Thus the family~$(f_i)_{i
  \in \I}$ determines a map of twisted diagrams if and only if
$z_\sigma^\sharp \circ F_\sigma (f_i) = f_j \circ y_\sigma^\sharp$.

For later use, we record the following fundamental fact:

\begin{lemma}
  \label{DiagramCond}
  Suppose we have a map $y_\sigma^\flat \colon Y_i \ra U_\sigma (Y_j)$
  in~$\C_i$ for each morphism $\sigma \colon i \ra j$ in~$\I$
  satisfying $y_{\id}^\flat = \id$, and denote by~$ y_\sigma^\sharp$
  the adjoint map $F_\sigma (Y_i) \ra Y_j $.  Let $ \tau \colon j \ra
  k $ be another morphism in~$\I$. Then if one of the squares
  \[\begin{diagram}
    F_\tau \circ F_\sigma (Y_i) & \lra[l>=2.5em]^{\iso} & F_{\tau \circ \sigma} (Y_i) \\
    \da<{F_\tau (y_\sigma^\sharp)} && \da>{y_{\tau \circ \sigma}^\sharp} \\
    F_\tau (Y_j) & \lra_{y_\tau^\sharp} & Y_k \\
  \end{diagram}
  \qquad {\rm \it and} \qquad
  \begin{diagram}
    Y_i & \rEqual & Y_i \\
    \da<{y_\sigma^\flat} && \da>{y_{\tau \circ \sigma}^\flat} \\
    U_\sigma (Y_j) & \lra[l>=4em]_{U_\sigma (y_\tau^\flat)}
    & U_{\tau \circ \sigma} (Y_k) \\
  \end{diagram}\]
  commutes so does the other (the upper horizontal map in the left
  square is the uniqueness isomorphism). In other words, if for all
  composable morphisms~$\sigma$ and~$\tau$ one of the squares
  commutes, the objects~$ Y_i $ together with the
  maps~$y_\sigma^\flat$ form a twisted diagram.
\end{lemma}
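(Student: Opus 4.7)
The strategy is to apply the adjunction bijection $\mathrm{Hom}_{\C_k}(F_{\tau\circ\sigma}(Y_i),Y_k) \iso \mathrm{Hom}_{\C_i}(Y_i, U_{\tau\circ\sigma}(Y_k))$ (equivalently, the bijection for the composite adjunction $F_\tau \circ F_\sigma \dashv U_\sigma \circ U_\tau$) to transform the left square into the right square. Since passing to adjoints is a bijection on hom-sets, it will suffice to verify that the two legs of the left square are the adjoints (under $F_\tau \circ F_\sigma \dashv U_\sigma \circ U_\tau$) of the two legs of the right square.

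For this I would first record the elementary calculational fact: given $F \dashv U$ with unit $\eta$, and morphisms $f \colon X' \ra X$ and $g \colon F(X) \ra Z$, naturality of $\eta$ gives $(g \circ F(f))^\flat = g^\flat \circ f$, where $(-)^\flat = U(-) \circ \eta$. Applying this twice, I compute the adjoint of $y_\tau^\sharp \circ F_\tau(y_\sigma^\sharp)$ in two steps: transposing across $F_\tau \dashv U_\tau$ turns it into $y_\tau^\flat \circ y_\sigma^\sharp$, and transposing the result across $F_\sigma \dashv U_\sigma$ turns it into $U_\sigma(y_\tau^\flat) \circ y_\sigma^\flat$. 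This is exactly the lower-left-then-right composite of the right square.

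Next I would address the uniqueness isomorphism. By \cite[IV.1, Corollary~1]{ML} the canonical iso $u \colon F_\tau \circ F_\sigma \iso F_{\tau\circ\sigma}$ is characterised by the fact that the adjunction bijections for $F_\tau \circ F_\sigma \dashv U_\sigma \circ U_\tau$ and $F_{\tau\circ\sigma} \dashv U_{\tau\circ\sigma}$ are intertwined by precomposition with $u$; equivalently, for any $h \colon F_{\tau\circ\sigma}(Y_i) \ra Y_k$, the $\flat$-adjoint of $h \circ u$ (with respect to the composite adjunction) coincides with the $\flat$-adjoint of $h$ (with respect to $F_{\tau\circ\sigma} \dashv U_{\tau\circ\sigma}$). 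In particular, the adjoint of $y_{\tau\circ\sigma}^\sharp \circ u$ is $y_{\tau\circ\sigma}^\flat$, which is the right-hand vertical map of the right square.

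Combining these two adjoint computations, the equation $y_{\tau\circ\sigma}^\sharp \circ u = y_\tau^\sharp \circ F_\tau(y_\sigma^\sharp)$ (commutativity of the left square) becomes, after transposing, the equation $y_{\tau\circ\sigma}^\flat = U_\sigma(y_\tau^\flat) \circ y_\sigma^\flat$ (commutativity of the right square). Since adjunction is a bijection, each equation implies the other, and the final clause of the lemma follows by applying this equivalence to every composable pair. The only delicate point is getting the uniqueness isomorphism into play cleanly; this is isolated in the second paragraph above and is essentially a citation of the standard uniqueness-of-adjoints result.
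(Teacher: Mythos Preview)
Your proposal is correct and is precisely the argument the paper has in mind: the authors merely say ``This follows from naturality of and composition rules for units and counits of adjunctions, cf.~\cite[\S\S IV.1+8]{ML}. We omit the details,'' and your two-step transposition together with the characterisation of the uniqueness isomorphism is exactly the omitted detail.
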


\begin{proof}
  This follows from naturality of and composition rules for units and
  counits of adjunctions, cf.~\cite[\S\S IV.1+8]{ML}. We omit the
  details.
\end{proof}

\subsection{Limits, Colimits, Direct and Inverse Image}

The next proposition says that {\it $\Tw (\I,\B)$ is as complete and
  cocomplete as all the~$\C_i$\/}, and that limits {\resp, colimits}
can be computed ``pointwise'' in the categories~$\C_i$.  For $i \in
\I$, let $Ev_i \colon \Tw (\I,\B) \ra \C_i$ denote the $i$th
evaluation functor which maps a twisted diagram $Y$ to its $i$th
term~$Y_i$.

\begin{proposition}[Limits and colimits of diagrams of twisted diagrams]
  \label{Completeness}
  Let~$ G \colon \D \ra \Tw (\I,\B) $ be a functor, and suppose that
  for all~$i$ the limit of~$ Ev_i \circ G $ exists. Then~$ \invlim G $
  exists and the canonical map
  \[Ev_i (\invlim G) \ra \invlim (Ev_i \circ G)\]
  is an isomorphism. A similar assertion holds for colimits.
\end{proposition}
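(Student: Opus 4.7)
The plan is to construct the limit pointwise and equip it with the required twisted-diagram structure, using the fact that each right adjoint $U_\sigma$ preserves limits. For each $i\in\I$, put $L_i := \invlim(Ev_i\circ G)$ with projection cone $\pi_i^d \colon L_i \to G(d)_i$, $d\in\D$. For any morphism $\sigma\colon i\to j$ in $\I$, the family of structure maps $g(d)_\sigma^\flat \colon G(d)_i \to U_\sigma G(d)_j$ is natural in $d$, because every morphism $G(\delta)$ in $\Tw(\I,\B)$ is by definition a map of twisted diagrams. Since $U_\sigma$ preserves limits, the pair $(U_\sigma L_j,\, U_\sigma\pi_j^d)$ is a limit of $U_\sigma\circ Ev_j\circ G$. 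The cone with components $g(d)_\sigma^\flat \circ \pi_i^d$ therefore induces a unique map $\ell_\sigma^\flat\colon L_i \to U_\sigma L_j$, which is the candidate structure map.

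Next I would verify the coherence conditions of Definition~\ref{DefTwistedDiag} for the family $\ell_\sigma^\flat$. The identity relation $\ell_{\id}^\flat = \id$ is immediate from the uniqueness clause of the universal property. For composable $\sigma\colon i\to j$ and $\tau\colon j\to k$, both $\ell_{\tau\circ\sigma}^\flat$ and $U_\sigma(\ell_\tau^\flat)\circ \ell_\sigma^\flat$ are maps $L_i \to U_\sigma U_\tau(L_k) = U_{\tau\circ\sigma}(L_k)$; composing either with $U_\sigma U_\tau(\pi_k^d)$ yields $g(d)_{\tau\circ\sigma}^\flat \circ \pi_i^d$, by the corresponding coherence for $G(d)$. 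Since $U_\sigma U_\tau$ preserves limits, universality forces the two composites to coincide, so $L$ is a twisted diagram. The projections $\pi_i^d$ are then automatically compatible with the structure maps (this is precisely the defining property of $\ell_\sigma^\flat$) and assemble into morphisms $\pi^d\colon L\to G(d)$ in $\Tw(\I,\B)$.

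For the universal property, given any cone $c^d\colon T\to G(d)$ in $\Tw(\I,\B)$, the components $c_i^d$ form cones over $Ev_i\circ G$ and induce unique maps $h_i\colon T_i\to L_i$; a final diagram chase, again using the universal property of the $L_i$'s, shows that the $h_i$ together form a morphism of twisted diagrams $T\to L$. Hence $L\iso\invlim G$ and, tautologically, $Ev_i(\invlim G) = L_i = \invlim(Ev_i\circ G)$. The colimit statement is dual: set $C_i := \dirlim(Ev_i\circ G)$, use the $\sharp$-form of the structure maps together with the fact that each left adjoint $F_\sigma$ preserves colimits to define $c_\sigma^\sharp\colon F_\sigma C_i \to C_j$, and invoke Lemma~\ref{DiagramCond} so that only the $\sharp$-form coherence needs to be verified. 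The main obstacle in either direction is the coherence condition, and it is handled uniformly by the uniqueness part of the relevant universal property; everything else is bookkeeping.
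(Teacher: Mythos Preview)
Your proof is correct and follows essentially the same approach as the paper: construct the (co)limit pointwise, use that the adjoints $U_\sigma$ (resp.\ $F_\sigma$) preserve limits (resp.\ colimits) to define the structure maps, and verify coherence via the universal property. The only cosmetic difference is that you treat the limit case explicitly with $\flat$-maps (the easier case, since $U$ is strictly functorial) while the paper writes out the colimit case with $\sharp$-maps, which requires extra bookkeeping with the uniqueness isomorphisms $F_{\tau\sigma}\iso F_\tau\circ F_\sigma$.
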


\begin{proof}
  The proof relies on the compatibility of left (\resp, right) adjoint
  functors with colimits (\resp, limits): if~$F$ is a left adjoint,
  and~$D$ is a functor, then there is a unique natural isomorphism $
  \dirlim (F \circ D) \ra F (\dirlim D) $, and similarly for right
  adjoints and limits (\cite[\S V.5, Theorem~1, p.~114]{ML}).

  To prove the lemma, we treat the case of colimits only.  (For limits
  one has to use similar techniques. Since~$U$ is supposed to be
  functorial, this is slightly easier.) Let $G_i := Ev_i \circ G$, and
  define $C_i := \dirlim G_i$. We claim that the objects~$C_i$
  assemble to a twisted diagram~$C$, and it is almost obvious that~$C$
  is ``the'' colimit of~$G$.

  Let $ \sigma \colon i \ra j $ denote a morphism in~$\I$.  The
  $\sharp$-type structure maps of the twisted diagrams~$ G (d) $ (for
  objects~$d \in \D$) assemble to a natural transformation
  \[G_\sigma^\sharp \colon F_\sigma \circ G_i \ra G_j\]
  of functors $\D \ra \C_j$. Hence we can define the $\sharp$-type
  structure map~$ c_\sigma^\sharp $ as the composite
  \[F_\sigma (C_i) = F_\sigma (\dirlim G_i) \iso \dirlim (F_\sigma
  \circ G_i) \lra^f \dirlim G_j = C_j\]
  with~$f$ induced by~$ G_\sigma^\sharp$.

  By Lemma~\ref{DiagramCond} we are left to show that the following
  square commutes for all composable morphisms~$\sigma $ and~$ \tau$
  in~$\I$: \diagram[eqno=(*)]
  F_\tau \circ F_\sigma (C_i) & \lra[l>=2.5em]^{\iso} & F_{\tau \circ \sigma} (C_i) \\
  \da<{F_\tau (c_\sigma^\sharp)} && \da>{c_{\tau \circ \sigma}^\sharp} \\
  F_\tau (C_j) & \lra_{C_\tau^\sharp} & C_k \\
        \enddiagram
        We replace the symbols~$ C_\ell $ and the structure maps by their definition and obtain
        the following bigger diagram:
        \diagram[eqno=(**)]
                F_\tau \circ F_\sigma (\dirlim G_i) &
                        \rEqual & F_\tau \circ F_\sigma (\dirlim G_i) &
                        \lra[l>=2.5em]^\iso & F_{\tau \circ \sigma} (\dirlim G_i) \\
                \da<\iso & 1 & \da<\iso & 2 & \da>\iso \\
                F_\tau \bigl( \dirlim (F_\sigma \circ G_i)\bigr) & \lra[l>=2.5em]^\iso &
                        \dirlim (F_\tau \circ F_\sigma \circ G_i) & \lra[l>=2.5em]^\iso &
                        \dirlim (F_{\tau \circ \sigma} \circ G_i) \\
                \da & 3 & \da & 4 & \da \\
                F_\tau \bigl( \dirlim (G_j)\bigr) & \lra[l>=2.5em]_\iso &
                        \dirlim (F_\tau \circ G_j) & \ra & \dirlim (G_k) \\
        \enddiagram
        All the small squares commute: for~1 this is true by uniqueness of the isomorphisms
        for commuting left adjoints with colimits. The horizontal maps of~2 are induced by the
        uniqueness isomorphism, the vertical maps are induced by the isomorphism for
        commuting left adjoints with colimits. By uniqueness, 2~commutes.
        Both horizontal maps of~3 are induced by the isomorphism for commuting colimits 
        with~$ F_\tau $,
        and both vertical maps are induced by the natural transformation
        $ G_\sigma^\sharp \colon F_\sigma       \circ G_i \ra G_j $. Hence~3 commutes.
        Finally, square~4 commutes by Lemma~\ref{DiagramCond}, applied
        componentwise to
        the diagrams~$G_\ell$, and by functoriality of~$ \dirlim $.

        Hence the diagram~$(**)$ commutes.
        But the square~$(*)$ is contained in there as the outer square,
        thus is commutative as claimed.
\end{proof}

If $\I$ is a small category and $\C$ is an arbitrary category, the category of
diagrams $\Fun(\I,\C)$ is the value of an internal hom functor on the category of 
categories. Hence it is functorial in both variables (provided the entries in
the first variable are small). The case of twisted diagrams is more
complicated since it involves ``maps'' of adjunction bundles as well as of the
actual diagrams.

\begin{definition}[Inverse image of bundles]
  Given a functor $\Phi \colon \I \ra \J $ and a $\J$-bundle $\B =
  (\D,\ G,\ V)$, we define the {\it inverse image of~$\B$
    under~$\Phi$\/}, denoted~$\Phi^* \B$, to be the $\I$-bundle $(\C,\
  F,\ U)$ given by $\C_i := \D_{\Phi (i)}$, $U_i := V_{\Phi (i)}$ and
  $F_i := G_{\Phi (i)}$.

  If $\Phi \colon \I \ra \J$ is the inclusion of a subcategory, we
  write $\B|_{\I}$ instead of $\Phi^* \B $ and call the resulting
  $\I$-bundle the {\it restriction\/} of~$\B$ to~$\I$.
\end{definition}

Forming inverse images is functorial, \ie, $\id_\C^* \B = \B$ and
$(\Phi \circ \Theta)^* \B = \Theta^* \Phi^* \B$.  The inverse image of
a trivial bundle is a trivial bundle.

\begin{definition}[Morphisms of bundles]
  Suppose $\A = (\C,\ F,\ U)$ and $\B = (\D,\ G,\ V)$ are
  $\I$-bundles. An {\it $\I$-morphism\/} $\Psi \colon \A \ra \B$
  consists of two families of functors $ \rho_i \colon \C_i \ra \D_i$
  and $\lambda_i \colon \D_i \ra \C_i$ where~$i$ ranges over the
  objects of~$\I$ such that $\lambda_i$~is left adjoint to~$\rho_i$,
  and such that for each morphism $\sigma \colon i \ra j$ in~$\I$ we
  have $V_\sigma \circ \rho_j = \rho_i \circ U_\sigma$.

  Given an $\I$-bundle~$\A$ and a $\J$-bundle~$\B$, a {\it morphism of
    bundles \/}
  \[\Xi \colon \A \ra \B\]
  is a pair $\Xi = (\Phi,\ \Psi)$ where $\Phi \colon \I \ra \J$ is a
  functor and $\Psi \colon \A \ra \Phi^* \B$ is an $\I$-morphism of
  $\I$-bundles.
\end{definition}

\begin{definition}[Inverse image of twisted diagrams]
  \label{DefInvDiag}
  Suppose we have a functor $\Phi \colon \I \ra \J$, a
  $\J$-bundle~$\B$, and a twisted diagram $Y \in \Tw (\J, \B)$. We
  define the {\it inverse image of~$Y$ under~$\Phi$\/},
  denoted~$\Phi^* Y$, as the twisted diagram over~$\I$ with
  coefficients in~$\Phi^* \B$ given by $(\Phi^* Y)_i := Y_{\Phi (i)}$
  and $(\Phi^* y)_\sigma^\flat := y_{\Phi (\sigma)}^\flat$ for all
  objects $i \in \I$ and all morphisms $ \sigma \in \I$.  We obtain a
  functor $\Phi^* \colon \Tw (\J,\B) \ra \Tw (\I,\Phi^*\B)$.

  Now suppose we have $\I$-bundles $\A = (\C,\, F,\, U)$ and $\B =
  (\D,\, G,\, V)$, and an $\I$-morphism $\Psi = (\rho,\, \lambda)
  \colon \A \ra \B$.  The functor {\it inverse image under~$\Psi$\/},
  denoted $\Psi^* \colon \Tw (\I,\B) \ra \Tw (\I,\A)$, assigns to a
  twisted diagram $ Y \in \Tw (\I,\B) $ the object $\Psi^* Y \in \Tw
  (\I,\A)$ given by $(\Psi^* Y)_i := \lambda_i (Y_i)$ with
  $\sharp$-type structure maps $(\Psi^* y)_\sigma^\sharp$ given by
  the composition
  \[F_\sigma ((\Psi^* Y)_i) = F_\sigma (\lambda_i (Y_i)) \iso
  \lambda_j (G_\sigma (Y_i)) \lra^{\lambda_i (y_\sigma^\sharp)}
  \lambda_j (Y_j) = (\Psi^* Y)_j\]
  for all objects~$i \in \I$ and morphisms $\sigma \colon i \ra j$.
  (We will prove in the next lemma that $\Psi^*$ is well-defined, \ie,
  that $\Psi^* Y$ is a twisted diagram.)

  More generally, a morphism $\Xi = (\Phi,\ \Psi) \colon \A \ra \B$ of
  bundles induces an inverse image functor $\Xi^* = \Psi^* \circ
  \Phi^* \colon \Tw (\J,\B) \ra \Tw (\I,\A)$.
\end{definition}

If $\Phi \colon \I \ra \J$ is the inclusion of a subcategory, we write
$Y|_{\I}$ instead of $\Phi^* Y$ and call the resulting twisted diagram
with coefficients in $\B|_{\I}$ the {\it restriction of~$Y$
  to~$\I$\/}. This defines the restriction functor
\[\Tw (\J,\B) \ra \Tw (\I,\B|_{\I}) \ .\]
As a special case of restriction (if $\I = \{i\}$ is the trivial
subcategory consisting of $i$), we obtain the evaluation functors
$Ev_i$ as defined above.

\begin{lemma}
  \label{InverseImageIsAFunctor}
  Given $\I$-bundles $\A = (\C,\ F,\ U)$ and $\B = (\D,\ G,\ V)$, an
  $\I$-morphism $\Psi = (\lambda,\rho) \colon \A \ra \B$, and a
  twisted diagram $Y \in \Tw (\I, \B)$, the object $\Psi^* Y$
  defined in Definition~\ref{DefInvDiag} is a twisted diagram with
  coefficients in~$\A$.
\end{lemma}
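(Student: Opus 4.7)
By Lemma~\ref{DiagramCond}, it suffices to verify the $\sharp$-type compatibility square for the candidate $Z := \Psi^* Y$ and each composable pair $\sigma \colon i \ra j$, $\tau \colon j \ra k$ in~$\I$. The key ingredient is a natural isomorphism of left adjoints: the defining equality $V_\sigma \circ \rho_j = \rho_i \circ U_\sigma$ exhibits $F_\sigma \lambda_i$ and $\lambda_j G_\sigma$ as left adjoints of a common functor, and hence produces a canonical natural isomorphism $\alpha_\sigma \colon F_\sigma \lambda_i \iso \lambda_j G_\sigma$ via \cite[IV.1, Corollary~1, p.~83]{ML}. By construction, $z_\sigma^\sharp = \lambda_j(y_\sigma^\sharp) \circ \alpha_{\sigma, Y_i}$, and analogously for $\tau$ and $\tau \circ \sigma$.

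I would then expand both paths of the square and massage them into the same form. The composite $z_\tau^\sharp \circ F_\tau(z_\sigma^\sharp)$ equals $\lambda_k(y_\tau^\sharp) \circ \alpha_{\tau, Y_j} \circ F_\tau \lambda_j(y_\sigma^\sharp) \circ F_\tau(\alpha_{\sigma, Y_i})$. Applying naturality of $\alpha_\tau$ to the morphism $y_\sigma^\sharp$ slides $F_\tau \lambda_j(y_\sigma^\sharp)$ across $\alpha_\tau$, turning it into $\lambda_k G_\tau(y_\sigma^\sharp)$; since $Y$ is itself a twisted diagram over $\B$, Lemma~\ref{DiagramCond} applied there rewrites $y_\tau^\sharp \circ G_\tau(y_\sigma^\sharp)$ as $y_{\tau \circ \sigma}^\sharp$ precomposed with the $G$-uniqueness isomorphism. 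The other path, unfolding the definition of $z_{\tau \circ \sigma}^\sharp$, reads $\lambda_k(y_{\tau \circ \sigma}^\sharp) \circ \alpha_{\tau \circ \sigma, Y_i}$ precomposed with the $F$-uniqueness isomorphism.

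Comparing, the whole assertion reduces to a coherence identity between two natural isomorphisms $F_\tau F_\sigma \lambda_i \iso \lambda_k G_{\tau \circ \sigma}$: one built from the $F$-uniqueness isomorphism followed by $\alpha_{\tau \circ \sigma}$, the other from $F_\tau(\alpha_\sigma)$, then $\alpha_\tau$, then $\lambda_k$ applied to the $G$-uniqueness isomorphism. Both are natural isomorphisms between left adjoints of the common composite right adjoint $\rho_i U_\sigma U_\tau = V_\sigma V_\tau \rho_k$, and under the adjunctions each corresponds to the identity transformation of this right adjoint; by the uniqueness clause of \cite[IV.1, Corollary~1]{ML} they therefore coincide. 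This final coherence step is where I expect the main obstacle to lie: everything preceding it is routine naturality bookkeeping, but making the uniqueness-of-left-adjoints argument airtight requires carefully identifying the units and counits of the four relevant adjunctions so that both assembled transformations genuinely fit the same adjoint pair.
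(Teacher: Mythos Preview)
Your argument is correct and matches the paper's proof in substance: the paper draws a single diagram whose upper rectangle is precisely your coherence identity (dispatched by uniqueness of left adjoints of $U_\sigma U_\tau \rho_k$), whose lower-left square is your naturality-of-$\alpha_\tau$ step, and whose lower-right square is your use of Lemma~\ref{DiagramCond} for~$Y$ together with functoriality of~$\lambda_k$. Your concern about the coherence step is well placed but the resolution you sketch is exactly what the paper invokes under the word ``uniqueness''.
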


\begin{proof}
        Let $\sigma \colon i \ra j$ and $\tau \colon j \ra k$ be morphisms in $\I$ and consider
        the diagram 
        \diagram
                F_\tau \circ F_\sigma \circ \lambda_i(Y_i) && \lra^\iso && F_{\tau \circ \sigma} \circ \lambda_i(Y_i)\\
                \da<\iso                                   &&           &&                              \da>\iso     \\
                F_\tau \circ \lambda_j \circ G_\sigma (Y_i)& \lra^\iso & \lambda_k \circ G_\tau \circ G_\sigma (Y_i)  &
                                \lra^\iso       & \lambda_k \circ G_{\tau \circ \sigma} (Y_i)   \\
                \da<{F_\tau\circ\lambda_j(y^\sharp_\sigma)}&           & \da<{\lambda_k\circ G_\tau (y^\sharp_\sigma)}&
                                                & \da>{\lambda_k (y^\sharp_{\tau \circ \sigma})}\\
                F_\tau \circ \lambda_j (Y_j)               & \lra_\iso & \lambda_k \circ G_\tau (Y_j)                 & 
                                \lra_{\lambda_k (y^\sharp_\tau)} & \lambda_k(Y_k)
        \enddiagram
        in which all arrows labelled with ``$\iso$'' denote uniqueness isomorphisms.
        Recall that the compositions of functors appearing in the upper rectangle
        are left adjoints to the functor $U_\sigma \circ U_\tau \circ \rho_k$. Thus
        the upper rectangle commutes by uniqueness. The lower left square commutes
        by naturality. The lower right square commutes since~$Y$ is a twisted
        diagram (Lemma~\ref{DiagramCond}) and $\lambda_k$ is a functor. Hence the whole diagram
        commutes and $\Psi^* Y$~is a twisted diagram by another
        application of Lemma~\ref{DiagramCond}.
\end{proof}

\begin{definition}[Direct image of twisted diagrams]
  Suppose we have a bundle morphism $\Xi = (\Phi,\Psi) \colon \A \ra
  \B$, where $\A = (\C,\ F,\ U)$ is an $\I$-bundle, $\B = (\D,\ G,\
  V)$ is a $\J$-bundle, $\Phi$~is a functor $ \I \ra \J$, and $\Psi =
  \{(\lambda_i,\rho_i)\}_{i \in \I}$ is an $\I$-morphism $\A \ra
  \Phi^*\B$.  Let~$Y$ be a twisted diagram with coefficients
  in~$\A$. It is straightforward to check that the definition
  $\Psi_*(Y)_i := \rho_i(Y_i)$ yields a twisted diagram with
  coefficients in~$\Phi^*\B$ having the structure maps
  \[\Psi_*(y)^\flat_\alpha \colon \rho_i(Y_i)
  \lra^{\rho_i(y^\flat_\alpha)} \rho_i\circ U_\alpha (Y_j) =
  V_{\Phi(\alpha)} \circ \rho_j(Y_j)\]
  for $\alpha \colon i \ra j$. In this way we obtain a functor
  \[\Psi_* \colon \Tw (\I,\A) \ra \Tw (\I,\Phi^*\B) \ .\]
  Suppose the right adjoint~$R\Phi$ of~$\Phi^*$ exists. The
  composition
  \[\Xi_* := R\Phi \circ \Psi_* \colon \Tw (\I,\A) \ra \Tw (\J,\B)\]
  is called the {\it direct image\/} functor.
\end{definition}

We will see below that if the bundle~$\B$ consists of complete
categories, the functor~$R \Phi$ exists and can be constructed by
twisted {\author Kan} extension. Using this, we can prove:

\begin{corollary}
  \label{DirectInverse}
  Let $\Xi = (\Phi,\Psi) \colon \A \ra \B$ be a bundle morphism, with
  $\B$ consisting of complete categories. Then the functor $\Xi^*$
  (inverse image under $\Xi$) has a right adjoint $\Xi_*$ (direct
  image under $\Xi$).
\end{corollary}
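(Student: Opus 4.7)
The plan is to use the factorizations $\Xi^* = \Psi^* \circ \Phi^*$ and $\Xi_* = R\Phi \circ \Psi_*$ that are already built into the definitions, to establish the two adjunctions $\Psi^* \dashv \Psi_*$ and $\Phi^* \dashv R\Phi$ separately, and then to compose them. The existence of $R\Phi$ as a right adjoint to $\Phi^*$ is precisely the content of the forthcoming twisted Kan extension construction alluded to just before the corollary; I would invoke that result as a black box, noting that it is where the completeness hypothesis on $\B$ enters.

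The new content is the adjunction $\Psi^* \dashv \Psi_*$ for an $\I$-morphism $\Psi = (\rho, \lambda) \colon \A \ra \B$. For $Y \in \Tw(\I, \B)$ and $X \in \Tw(\I, \A)$, a morphism $\Psi^* Y \ra X$ in $\Tw(\I, \A)$ is a family $f_i \colon \lambda_i(Y_i) \ra X_i$ in the $\C_i$, and a morphism $Y \ra \Psi_* X$ in $\Tw(\I, \B)$ is a family $\tilde f_i \colon Y_i \ra \rho_i(X_i)$ in the $\D_i$. These families correspond bijectively by the pointwise adjunctions $\lambda_i \dashv \rho_i$; what remains is to show that this objectwise bijection sends morphisms of twisted diagrams to morphisms of twisted diagrams.

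To verify this I would use the defining identity $V_\sigma \circ \rho_j = \rho_i \circ U_\sigma$ of an $\I$-morphism, together with the canonical uniqueness isomorphism $F_\sigma \circ \lambda_i \iso \lambda_j \circ G_\sigma$ that arises from uniqueness of left adjoints. Under these identifications the $\sharp$-structure map of $\Psi^* Y$ is adjoint to the $\flat$-structure map of $\Psi_* X$, so a diagram chase patterned on Lemma~\ref{InverseImageIsAFunctor} shows that the compatibility square for $f$ in $\Tw(\I, \A)$ corresponds under the pointwise adjunction to the compatibility square for $\tilde f$ in $\Tw(\I, \B)$. Naturality in $X$ and $Y$ is inherited from naturality of the pointwise adjunction isomorphisms. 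Composing the two adjunctions then yields $\Xi^* = \Psi^* \circ \Phi^* \dashv R\Phi \circ \Psi_* = \Xi_*$, which is the claim.

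The main obstacle is the diagram chase trading compatibility in $\A$ for compatibility in $\B$: one must simultaneously exchange left and right adjoints via $\lambda_i \dashv \rho_i$ and relate the two families of twists ($F$ versus $G$, $U$ versus $V$) using both the strict identity on right adjoints and the uniqueness isomorphism on left adjoints. Everything else in the argument is formal manipulation with units and counits, and invocation of the Kan-extension result for $R\Phi$.
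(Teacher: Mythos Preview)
Your proposal is correct and follows exactly the paper's approach: factor $\Xi^*$ and $\Xi_*$, invoke the twisted right Kan extension for $\Phi^* \dashv R\Phi$, and establish $\Psi^* \dashv \Psi_*$ via the pointwise adjunctions together with a compatibility check. In fact you supply more detail than the paper, which simply asserts that ``adjoining pointwise respects maps of twisted diagrams'' and omits the verification.
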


\begin{proof}
  Since~$R\Phi$ is right adjoint to~$\Phi^*$ by assumption, it remains
  to show that~$\Psi_*$ is right adjoint to~$\Psi^*$. However, this is
  true because~$\Psi_*$ is pointwise right adjoint to~$\Psi^*$, and
  it can be checked that adjoining pointwise respects maps of twisted
  diagrams. We omit the details.
\end{proof}

\subsection {Twisted Kan Extensions}
\label{subsec:kan}

Assume that~$\B$ is a trivial bundle over~$\J$, consisting of the
category~$\C$ (and identity functors), and $\Phi \colon \I \ra \J$ is
a functor.  In this case, the inverse image of~$\B$ under~$\Phi$ is
the trivial bundle over~$\I$ (consisting of~$\C$ and identity
functors), and~$\Phi^*$ is the functor $\Fun(\J,\C) \ra \Fun(\I,\C)$
mapping~$Y$ to~$Y\circ \Phi$. If~$\C$ is complete, the
functor~$\Phi^*$ has a right adjoint given by right \textsc{Kan\/}
extension along $\Phi$ (\cite[\S X.3, Corollary~2]{ML}).

It is possible to construct \textsc{Kan\/} extensions in our
framework.  We consider only left \textsc{Kan\/} extensions, the other
case being similar (and easier).

\medbreak

Let $\Phi \colon \I \ra \J$ be a functor, $\B=(\C,\ F,\ U)$ a
$\J$-bundle, and $Y$ a twisted diagram over $\I$ with coefficients in
$\Phi^*\B = (\D,\ G,\ U)$.  First, we have to define a twisted diagram
$L(Y)$ over $\J$ with coefficients in $\B$. (Later, we will convince
ourselves that the assignment $Y \rMapsto L(Y)$ is a functor which is
left adjoint to $\Phi^*$.)  Let $j \in \J$ be given, and let $\Phi
\downarrow j$ denote the category of objects $\Phi$-over $j$. Its
objects are maps of the form $\sigma \colon \Phi(i) \ra j \in \J$ (for
$i$ an object of $\I$). The morphisms from $\sigma \colon \Phi(i) \ra
j$ to $\tau \colon \Phi(i') \ra j$ are morphisms $\alpha \colon i \ra
i' \in \I$ satisfying $\tau \circ \Phi(\alpha) = \sigma$.  Consider
the assignment
\[D_j^Y \colon \Phi \downarrow j \ra \C_j, \qquad (\Phi(i) \lra^\sigma
j) \mapsto F_\sigma(Y_i)\]
This is well-defined because $Y_i$ is an object of $\D_i =
\C_{\Phi(i)}$ by definition of $\Phi^*\B$, so $F_\sigma(Y_i)$ is an
object of $\C_j$.

The assignment~$D^Y_j$ is in fact a functor, as one can deduce as follows.
Let $\pr_\I$ denote the obvious projection functor $\Phi \downarrow j \ra \I$
mapping the object $ \Phi (i) \ra j $ to~$i$, and define $\pr_\J := \Phi \circ \pr_\I$.
Using the equality $\pr_\J^*\B = \pr_\I^*(\Phi^*\B)$,
we get a functor $\pr_\I^* \colon \Tw (\I,\Phi^*\B) \ra 
\Tw (\Phi \downarrow j, \pr_\J^*\B)$.
Let $\{j\}$ denote the subcategory of $\J$ given by
the object $j$ (and no non-identity morphism) and consider the category
$\C_j$ as a (trivial) bundle over $\{j\}$. Then we have a morphism of bundles
$\Xi \colon \C_j \ra \pr_\J^*\B$ consisting of the functor 
$\Phi \downarrow j \ra \{j\}$ and the $(\Phi \downarrow j)$-morphism $\Psi$ from
$\pr_\J^*\B$ to the trivial bundle with $\sigma$-component the adjunction 
$F_\sigma \colon \C_{\Phi(i)} \pile{\ra \\ \la} \C_j \colon U_\sigma$
(for $\sigma \colon \Phi(i) \ra j)$. The inverse image under $\Psi$ is a functor
\[\Psi^* \colon \Tw (\Phi\downarrow j, \pr_\J^*\B) \ra \Fun (\Phi
\downarrow j, \C_j) \ .\]
Tracing the definitions shows $D^Y_j =\Psi^* \pr_\I^* (Y)$.

\medskip

Now assume that the bundle $\B$ consists of cocomplete categories. 
Define $L(Y)_j$ as the colimit of $D^Y_j$. To prove that the $L(Y)_j$ assemble
to a twisted diagram, we construct for each $\alpha \colon j \ra k$
a structure map $l_\alpha^\sharp \colon F_\alpha(L(Y)_j) \ra L(Y)_k$ and
apply Lemma~\ref{DiagramCond}.

Since $F_\alpha$ is a left adjoint, we have a unique isomorphism 
$$ u_\alpha\colon F_\alpha(\dirlim D^Y_j) \iso \dirlim (F_\alpha \circ D^Y_j) \ . $$
Let $\Phi(i) \lra^\sigma j$ be an object of $\Phi \downarrow j$. Then
$\alpha \circ \sigma$ is an object of $\Phi \downarrow k$, and there is a canonical 
map $F_{\alpha \circ \sigma}(Y_i) \ra \dirlim D^Y_k = L(Y)_k$
(since~$F_{\alpha \circ \sigma} (Y_i)$ appears in the diagram~$D^Y_k$).
The composition with a uniqueness isomorphism yields a map 
        $$ t_{\sigma} \colon F_\alpha\circ F_\sigma(Y_i) \ra L(Y)_k \ .$$
The $t_\sigma$ assemble to a natural transformation from $F_\alpha \circ D^Y_j$
to the constant diagram with value $L(Y)_k$ (a proof involves the uniqueness of the
uniqueness isomorphisms and the naturality of the canonical maps mentioned
above; we omit the details).
By taking colimits, this determines a map
$$ v_\alpha \colon \dirlim (F_\alpha \circ D^Y_j) \ra L(Y)_k \ , $$
and we set $l^\sharp_\alpha := v_\alpha \circ u_\alpha$.

Now we have to check that, for $j \lra^\alpha k \lra^\beta l \in \J$, the square
\diagram[eqno=(*)]
        F_\beta \circ F_\alpha (L(Y)_j) & \lra[l>=2.5em]^{\iso} & F_{\beta \circ \alpha}(L(Y)_j)\\
        \da<{F_\beta (l_\alpha^\sharp)} && \da>{l_{\beta \circ \alpha}^\sharp} \\
        F_\beta (L(Y)_k)                & \lra_{l_\beta^\sharp} & L(Y)_l \\
\enddiagram
commutes. First of all, the diagram
\diagram[l>=2em]
        F_\beta \circ F_\alpha (L(Y)_j) & \lra^\iso     & F_{\beta \circ \alpha}(L(Y)_j)        \\
        \da<\iso                        &               & \da>\iso                              \\
        \dirlim (F_\beta \circ F_\alpha \circ D^Y_j)&\lra_\iso&\dirlim (F_{\beta \circ \alpha} \circ D^Y_j)
\enddiagram
consisting of uniqueness isomorphisms commutes because of their uniqueness. 
By the universal property of the colimit and the definition of the structure maps, 
we are left to show that for every $\sigma \colon \Phi(i) \ra j$ the diagram
\diagram[l>=3em]
        F_\beta \circ F_\alpha \circ F_\sigma(Y_i)& \lra^\iso& F_{\beta \circ \alpha} \circ F_\sigma(Y_i)\\
        \da<\iso                                &               & \da>\iso                              \\
        F_\beta \circ F_{\alpha \circ \sigma}(Y_i)& \lra_\iso& F_{\beta \circ \alpha \circ \sigma}(Y_i) \\
        \da<{c_{\alpha \circ \sigma}}           &               &  \\
        \dirlim (F_\beta \circ D^Y_k)           &               & \da>{c_{\beta \circ \alpha \circ \sigma}} \\
        \da<\iso                                &               &                                       \\
        F_\beta(L(Y)_k)                 & \lra_{l^\sharp_\beta} & L(Y)_l
\enddiagram
commutes, where the maps $c_{\alpha \circ \sigma}$ and $c_{\beta \circ
  \alpha \circ \sigma}$ are canonical maps to the colimit, and all
maps labelled with `$\iso$' are uniqueness isomorphisms. The upper
square commutes by uniqueness, and the lower square commutes by
definition of $l^\sharp_\beta$. This implies that the square~$(*)$
commutes, and Lemma~\ref{DiagramCond} shows that~$L(Y)$ is a twisted
diagram as claimed.

\begin{theorem}[Left \textsc{Kan\/} extensions]
  \label{LeftKan}        
  Let~$\B$ be a $\J$-bundle consisting of cocomplete categories,
  $\Phi\colon \I \ra \J$ a functor and~$Y$ a twisted diagram with
  coefficients in~$\Phi^*\B$. The assignment $Y \mapsto L(Y)$
  described above is the object function of a functor $ L\Phi \colon
  \Tw (\I, \Phi^* \B) \ra \Tw (\J, \B) $ which is left adjoint
  to~$\Phi^*$. 
\end{theorem}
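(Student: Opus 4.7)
The plan is to first extend $L\Phi$ to a functor and then establish the adjunction by exhibiting the natural bijection on hom-sets directly via the universal property of the colimits $L(Y)_j$.

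\textbf{Extending $L\Phi$ to morphisms.} Given a map $f \colon Y \ra Y'$ in $\Tw (\I, \Phi^*\B)$ and an object $j \in \J$, the family $F_\sigma (f_i) \colon F_\sigma (Y_i) \ra F_\sigma (Y'_i)$ (indexed by $\sigma \colon \Phi(i) \ra j$ in $\Phi \downarrow j$) forms a natural transformation $D^Y_j \ra D^{Y'}_j$; naturality at a morphism $\alpha \colon (\sigma, i) \ra (\tau, i')$ of $\Phi \downarrow j$ follows by applying $F_\tau$ to the square expressing compatibility of $f$ with $y^\sharp_\alpha$ and using the uniqueness isomorphism $F_\sigma \iso F_\tau \circ F_{\Phi(\alpha)}$. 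Taking $\dirlim$ yields $L\Phi(f)_j \colon L(Y)_j \ra L(Y')_j$; compatibility of the $L\Phi(f)_j$ with the structure maps $l^\sharp_\alpha$ follows from naturality of the isomorphism $u_\alpha$ and of the cone maps $t_\sigma$ in the input diagram. Functoriality of $L\Phi$ is immediate from functoriality of $\dirlim$.

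\textbf{The adjunction bijection.} I construct a natural bijection $\Phi_! \colon \Tw (\J,\B)(L\Phi(Y),Z) \iso \Tw (\I, \Phi^*\B)(Y, \Phi^* Z) \colon \Phi^!$. For $h \colon Y \ra \Phi^* Z$ with components $h_i \colon Y_i \ra Z_{\Phi(i)}$, define a cone over $D^Y_j$ with vertex $Z_j$ by $\sigma \mapsto z^\sharp_\sigma \circ F_\sigma(h_i)$. The cone condition at $\alpha \colon (\sigma, i) \ra (\tau, i')$ reduces, after unwinding $D^Y_j(\alpha) = F_\tau(y^\sharp_\alpha)$ composed with the uniqueness isomorphism, to the compatibility $h_{i'} \circ y^\sharp_\alpha = z^\sharp_{\Phi(\alpha)} \circ F_{\Phi(\alpha)}(h_i)$ and Lemma~\ref{DiagramCond} applied to $Z$. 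The universal property of $L(Y)_j = \dirlim D^Y_j$ then produces a unique map $\Phi_!(h)_j \colon L(Y)_j \ra Z_j$. Conversely, given $g \colon L\Phi(Y) \ra Z$, set $\Phi^!(g)_i := g_{\Phi(i)} \circ c_i$, where $c_i \colon Y_i = F_{\id_{\Phi(i)}}(Y_i) \ra L(Y)_{\Phi(i)}$ is the canonical map at the object $\id_{\Phi(i)} \in \Phi \downarrow \Phi(i)$.

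\textbf{Verification.} That $\Phi_!(h)$ and $\Phi^!(g)$ are morphisms of twisted diagrams requires checking commutativity with the $\sharp$-type structure maps. For $\Phi_!(h)$, by the universal property the check reduces, for each $\sigma \colon \Phi(i) \ra j$ and each $\alpha \colon j \ra k$, to showing that two maps $F_\alpha \circ F_\sigma(Y_i) \ra Z_k$ coincide; both equal $z^\sharp_{\alpha \circ \sigma} \circ F_{\alpha \circ \sigma}(h_i)$ modulo uniqueness isomorphism after applying Lemma~\ref{DiagramCond} to $Z$. For $\Phi^!(g)$, one uses the identity $l^\sharp_\sigma \circ F_\sigma(c_i) = \tilde{c}_\sigma$, where $\tilde{c}_\sigma \colon F_\sigma(Y_i) \ra L(Y)_j$ is the canonical map at the $\sigma$-component (an identity built into the construction $l^\sharp_\sigma = v_\sigma \circ u_\sigma$), combined with the compatibility of $g$ with the $l^\sharp$. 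The inverse property: $\Phi^! \Phi_!(h)_i = z^\sharp_{\id} \circ F_{\id}(h_i) = h_i$ by the axioms $F_{\id} = \id$ and $z^\sharp_{\id} = \id$; conversely, $\Phi_! \Phi^!(g)_j$ and $g_j$ agree on each $\sigma$-component of the colimit because $g_j \circ \tilde{c}_\sigma = g_j \circ l^\sharp_\sigma \circ F_\sigma(c_i) = z^\sharp_\sigma \circ F_\sigma(g_{\Phi(i)} \circ c_i) = z^\sharp_\sigma \circ F_\sigma(\Phi^!(g)_i)$. Naturality in $Y$ and $Z$ then follows from the universal properties invoked. The main obstacle is the bookkeeping in showing that $\Phi_!(h)$ respects twisted structure maps; this is a diagram chase in the spirit of Proposition~\ref{Completeness}, juggling uniqueness isomorphisms, canonical colimit maps, and the twisted-diagram condition on $Z$.
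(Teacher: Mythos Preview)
Your proof is correct. The paper takes the same underlying construction for $L\Phi$ on morphisms (the natural transformation $D^Y_j \ra D^Z_j$ given by the $F_\sigma(f_i)$) but then establishes the adjunction via unit and counit: it defines $\eta_Y$ pointwise as the canonical map $Y_i \ra L(Y)_{\Phi(i)}$ at the object $\id_{\Phi(i)}$ (your $c_i$), defines $\epsilon_Z$ via the cone $\sigma \mapsto z^\sharp_\sigma$ on $D^{\Phi^*Z}_j$, and then verifies the triangular identities. Your hom-set bijection $\Phi_!/\Phi^!$ is the transpose of exactly these data: $\Phi^!(g) = \Phi^*(g) \circ \eta_Y$ and $\Phi_!(h) = \epsilon_Z \circ L(h)$ in disguise. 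The trade-off is the usual one: the unit/counit route localises the work into checking that $\eta$ and $\epsilon$ are maps of twisted diagrams and are natural, leaving the triangular identities as a routine exercise, while your direct bijection folds those checks into the verification that $\Phi_!$ and $\Phi^!$ are mutually inverse and land in the right hom-sets. Neither approach avoids the diagram chase with uniqueness isomorphisms and canonical colimit maps; they just apportion it differently.
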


\begin{proof}
  Abbreviate $L\Phi$ by~$L$ and keep the notation used in the
  construction of~$L(Y)$.

  We start by describing the effect of~$L$ on morphisms.  Let $f
  \colon Y \ra Z$ be a map of twisted diagrams with coefficients in
  $\Phi^*\B$, and fix an object $j \in \J$. For each $\sigma \colon
  \Phi(i) \ra j$, the maps $F_\sigma(f_i)$ form a natural
  transformation from $D^Y_j$ to $D^Z_j$, because the uniqueness
  isomorphisms are natural, $f$ is a map of twisted diagrams and
  $F_\sigma$ is a functor. This defines a map on the colimits $L(f)_j
  \colon L(Y)_j \ra L(Z)_j$.

        We claim that the maps~$L(f)_j$ assemble to a map~$L(f)$ of twisted diagrams.
        For $\alpha \colon j \ra k$ in $\J$, consider the diagram
\diagram
        F_\alpha(L(Y)_j)        & \lra^{F_\alpha(L(f)_j)}       & F_\alpha(Z_j)         \\
        \da^{l^\sharp_\alpha}   &                               & \da_{m^\sharp_\alpha} \\
        L(Y)_k                  & \lra_{L(f)_k}                 & L(Z)_k
\enddiagram
        where $l$ and $m$ denote the structure maps of $L(Y)$ and $L(Z)$. It commutes if and
        only if for each object $\sigma \colon \Phi(i) \ra j $ of~$\Phi \downarrow j$,
        the diagram
        \diagram
                F_\alpha\circ F_\sigma(Y_i)     &\lra^{F_\alpha\circ F_\sigma(f_i)}     &F_\alpha\circ F_\sigma(Z_i)\\
                \da^\iso                        &                                       & \da_\iso      \\
                F_{\alpha\circ\sigma}(Y_i)      & \lra_{F_{\alpha\circ\sigma}(f_i)}     & F_{\alpha\circ\sigma}(Z_i)\\
                \da                             &                                       & \da           \\
                L(Y)_k                          & \lra_{L(f)_k}                         & L(Z)_k
        \enddiagram
        commutes. The isomorphisms are uniqueness isomorphisms, which are natural, hence the
        upper square commutes. The lower vertical arrows denote the canonical map to the 
        colimit, and the naturality of these make the lower square commute. 

        Having checked that $L(f)$ is indeed a map of twisted
        diagrams, it is clear that $L$~is a functor, because maps of
        twisted diagrams are defined pointwise, and~$L_j$ is defined
        as the composition of functors~$\dirlim \circ \Psi^* \circ
        \pr_\I^*$, (with $\Psi$ and $\pr_I$ being explained below the
        definition of $D^Y_j$). To prove that $L$~is left adjoint
        to~$\Phi^*$, we construct natural transformations $\eta \colon
        \id \ra \Phi^* \circ L$ and $\epsilon \colon L \circ \Phi^*
        \ra \id$ satisfying the triangular identities \cite[\S IV.1,
        Theorem~2~(v)]{ML}.

  For $Y \in \Tw (\I,\Phi^*\B)$, the $Y$-component $\eta_Y$ is given
  (pointwise) as the canonical map to the colimit $Y_i \ra
  \Phi^*(L(Y))_i = L(Y)_{\Phi(i)}$ which corresponds to the identity
  $\id\colon\Phi(i) \ra \Phi(i)$ (an object of $\Phi \downarrow
  \Phi(i)$).  We check that $\eta_Y$~is a map of twisted diagrams.
  Let $\alpha\colon i \ra j \in \I$ be given and consider the diagram
  \begin{diagram}
    F_{\Phi(\alpha)}(Y_i)   & \lra^{F_{\Phi(\alpha)}((\eta_Y)_i)}   & F_{\Phi(\alpha)}(L(Y)_{\Phi(i)})\\
    \da<{y^\sharp_\alpha}   &                                       & \da>{l^\sharp_{\Phi(\alpha)}}   \\
    Y_j & \lra_{(\eta_Y)_j} & L(Y)_{\Phi(j)}
  \end{diagram}
  with the structure map $y^\sharp_\alpha$ starting from
  $G_\alpha(Y_i) = F_{\Phi(\alpha)}(Y_i)$ by definition of
  $\Phi^*\B$. Since the structure map $l^\sharp_{\Phi(\alpha)}$ is
  defined via the canonical maps to the colimit
  \[F_{\Phi(\alpha)}\circ F_\sigma(Y_k) \iso F_{\Phi(\alpha) \circ
    \sigma}(Y_k) \ra L(Y)_{\Phi(j)}\] 
  (for $\sigma\colon\Phi(k) \ra \Phi(i)$ an object of $\Phi \downarrow
  \Phi(i)$), the composition
  \[l^\sharp_{\Phi(\alpha)} \circ F_{\Phi(\alpha)}((\eta_Y)_i)\]
  coincides with the canonical map to the colimit
  \[c\colon F_{\Phi(\alpha)}(Y_i) \ra L(Y)_{\Phi(j)}\]
  (the special case $\sigma = id_{\Phi(i)}$). Hence we have to show
  that the triangle
  \begin{diagram}
    F_{\Phi(\alpha)}(Y_i)   &                       &               \\
    \da<{y^\sharp_\alpha}   & \rdTo^c                       &               \\
    Y_j & \lra_{(\eta_Y)_i} & L(Y)_{\Phi(j)}
  \end{diagram}
  commutes. But this is true by the definition of $L(Y)_{\Phi(j)}$ as
  the colimit of $D^Y_{\Phi(j)}$.  The naturality of $\eta$ can be
  explained as follows. For $i\in \I$, the canonical maps to the
  colimit $Y_j\ra L(Y)_{\Phi(i)}$ for varying $\sigma\colon j\ra
  \Phi(i)$ are a natural transformation of diagrams (with shape
  $\Phi\downarrow \Phi(i)$). In particular, the $\Phi(i)$-component,
  being the map $(\eta_Y)_i$, is natural.  We turn to the definition
  of~$\epsilon \colon L\circ \Phi^* \ra \id$.  For $Z \in \Tw
  (\J,\B)$, the map $\epsilon_Z$ is given pointwise as follows: for
  every $j \in \J$ and every $\sigma \colon \Phi(i) \ra j$ in $\Phi
  \downarrow j$, the structure maps $F_\sigma(\Phi^*(Z)_i) =
  F_\sigma(Z_{\Phi(i)}) \lra^{y^\sharp_\sigma} Z_j$ assemble to a
  natural transformation from~$D^{\Phi^* Z}_j$ to the constant diagram
  with value~$Z_j$ (this follows from Lemma~\ref{DiagramCond} and the
  fact that $Z$~is a twisted diagram). By the universal property of
  the colimit, this natural transformation defines a unique map $$
  (\epsilon_Z)_j \colon L(\Phi^*(Z))_j \ra Z_j \ . $$ To prove that
  $\epsilon_Z$ is a map of twisted diagrams, let $\alpha\colon j \ra k
  \in \J$ and consider the following diagram:
  \begin{diagram}[l>=5em]
    F_\alpha(L(\Phi^*(Z))_j)& \lra^{F_\alpha((\epsilon_Z)_j)}       & F_\alpha(Z_j)         \\
    \da<{m^\sharp_\alpha}   &                                       & \da>{z^\sharp_\alpha} \\
    L(\Phi^*(Z))_k & \lra_{(\epsilon_Z)_k} & Z_k
  \end{diagram}
  Using the universal property of the colimit, the definition of
  $\epsilon_Z$ and the definition of the structure map
  $m^\sharp_\alpha$, we are left to show that, for each
  $\sigma\colon\Phi(i) \ra j$, the diagram
  \begin{diagram}[l>=4em]
    F_\alpha(F_\sigma(Z_i)) & \lra^{F_\alpha(z^\sharp_\alpha)}      & F_\alpha(Z_j)         \\
    \da                     &                                       & \da>{z^\sharp_\alpha} \\
    L(\Phi^*(Z))_k & \lra_{(\epsilon_Z)_k} & Z_k
  \end{diagram}
  commutes, where the left vertical map is the composition of the
  uniqueness isomorphism and the canonical map to the colimit
  $F_{\alpha \circ \sigma}(Z_i) \ra L(\Phi^*(Z))_k$.  However, the
  definition of $\epsilon_Z$ implies that the diagram above commutes
  since~$Z$ is a twisted diagram.  To prove the naturality
  of~$\epsilon$, let $f\colon Y \ra Z$ be a map in $\Tw(\J,\B)$.  For
  $j\in \J$ and every $\sigma\colon \Phi(i)\ra j$ in $\Phi\downarrow
  j$, the maps $F_\sigma(f_\phi(i))\colon F_\sigma(Y_\phi(i))\ra
  F_\sigma(Z_\phi(i))$ assemble to a natural transformation
  $D^{\Phi^*f}_j$ of functors on $\Phi\downarrow j$ making the diagram
  \begin{diagram}
    D^{\Phi^*Y}_j           &       \rTo    &       Y_j             \\
    \dTo^{D^{\Phi^*f}_j}    &               &       \dTo_{f_j}      \\
    D^{\Phi^*Z}_j           &       \rTo    &       Z_j             \\
  \end{diagram}
  commute. The horizontal maps are the ones appearing in the
  definition of $\epsilon$.  Since the colimit functor is left adjoint
  to the ``constant diagram'' functor, the square
  \begin{diagram}
    L(\Phi^*(Y))_j          &       \rTo^{(\epsilon_Y)_j}   &       Y_j             \\
    \dTo^{L(\Phi^*f)}       &                               &       \dTo_{f_j}      \\
    L(\Phi^*(Z))_j          &       \rTo^{(\epsilon_Z)_j}   &       Z_j             \\
  \end{diagram}
  commutes, proving the naturality of $\epsilon$.  \medskip

  It remains to prove that the composites
  \[L \lra^{L\eta} L \circ \Phi^* \circ L \lra^{\epsilon L} L \qquad
  {\rm and} \qquad \Phi^* \lra^{\eta \Phi^*} \Phi^* \circ L \circ
  \Phi^* \lra^{\Phi^* \epsilon} \Phi^*\]
  are identity natural transformations. The verification is
  straightforward; we omit the details.
%
%
\end{proof}

The right adjoint of~$\Phi^*$, obtained by the corresponding twisted version of 
right \textsc{Kan\/} extension along~$\Phi$, will be denoted~$R\Phi$. By the dual
of Theorem~\ref{LeftKan} it exists if~$\B$ consists of complete categories.

Recall the functor $Ev_i$ defined as the restriction along $\{ i\} \ra \J$.
If the bundle $\B$ consists of cocomplete categories, its left adjoint 
\[Fr_i \colon \C_i \ra \Tw (\J,\B)\]
exists by Theorem~\ref{LeftKan}. It is the analogue of the free
diagram at $i$ and will be needed later in the construction of a
cofibrantly generated model structure. We call~$Fr_i (K)$ the {\it
  free twisted diagram generated by~$K \in \C_i$\/}.

\begin{example}[Spectra, continued]
  Let~$Sp$ be the bundle defined in Example~\ref{Spectra} which leads
  to ordinary spectra. The $n$th evaluation functor maps a spectrum to
  its $n$th term, and the corresponding $n$th free twisted diagram of
  a pointed simplicial set~$K$ is the spectrum
  \[* \rNonsense * \rNonsense \ldots \rNonsense * \rNonsense K
  \rNonsense \Sigma K \rNonsense \Sigma^2 K \rNonsense \ldots\]
  with~$K$ appearing at the $n$th spot and all $\sharp$-type structure
  maps being identities except for the map $\Sigma (*) = * \ra K$.
\end{example}

\subsection {Construction of Adjunction Bundles}
\label{subsec:construction}

We think of twisted diagrams as generalised diagrams. However, there
is an alternative approach using fibred and cofibred categories in the
sense of {\author Grothendieck\/}. For definitions and notation the
reader may wish to consult~\cite{Q1}.

Let us recall the \textsc{Grothendieck\/} construction~$\Gr (U)$ of a
contravariant functor~$U$ defined on~$\I$ with values in the category
of (small) categories.  The objects of~$\Gr (U)$ are the pairs~$(i,
Y)$ with~$i$ an object of~$\I$ and~$Y$ an object of~$U (i)$. A
morphism $(i, Y) \ra (j, Z)$ consists of a morphism $i \lra^\sigma j$
in~$\I$ and a morphism $Y \lra^A U(\sigma) (Z)$ in~$U(i)$. Composition
is given by the rule
\[(\tau, B) \circ (\sigma, A) := \bigl(\tau \circ \sigma, U(\sigma)
(B) \circ A \bigr) \ .\]
This construction comes equipped with a functor $\Gr (U) \ra \I$.

\begin{remark}
  \label{RemGroth}
  An adjunction bundle determines, by definition, a functor $U \colon
  \I^\op \ra Cat$, hence a functor $\Gr (U) \ra \I$.  The existence of
  the left adjoints~$F_\sigma$ make $\Gr (U)$ a cofibred category
  over~$\I^\op$, even a bifibred bundle in the sense of the next
  definition.
\end{remark}

\begin{definition}
  \label{Bundle}
  Given a functor $\pi \colon \mathcal{E} \ra \mathcal{A}$, we call~$\mathcal E$
  a {\it bifibred bundle over~$\mathcal A$\/} if the following conditions
  are satisfied (using notation from~\cite{Q1}):
  \begin{enumerate}
  \item The functor~$\pi$ is fibred, and for all composable morphisms
    $\alpha$ and~$\beta$ in~$\mathcal A$, the natural isomorphism
    $\alpha^* \circ \beta^* \ra (\beta \circ \alpha)^*$ is the
    identity.
  \item The functor~$\pi$ is cofibred, and for all morphisms $ \alpha
    \in \mathcal A $ the functor~$\alpha^*$ is right adjoint
    to~$\alpha_*$.
  \end{enumerate}
  In this situation, a functor $f \colon \I \ra \mathcal A$ determines an
  $\I$-indexed adjunction bundle $f \bowtie \pi = \I \bowtie_\mathcal{A}
  \mathcal{E}$ which sends the object $i \in \I$ to the category $
  \pi^{-1} (f (i)) $ and the morphism $\mu \in \I$ to the adjoint pair
  $f(\mu)_*$ and~$f(\mu)^*$.
\end{definition}

\begin{remark}[\textsc{M. Brun\/}'s reformulation of twisted diagrams]
  Recall from Remark~\ref{RemGroth} the functor $ \pi \colon \Gr (U) \ra
  \I $ associated to an adjunction bundle. A straightforward
  calculation which we omit shows that $ \Tw (\I, \B) $ is the
  category of sections of~$\pi$.

  More generally, given a bifibred bundle~$\pi$ and an adjunction
  bundle $f \bowtie \pi$ as in~\ref{Bundle}, the category of twisted
  diagrams $\Tw (\I, f \bowtie \pi)$ is the category of lifts of~$f$
  to~$\mathcal E$, \ie, the category of functors $g \colon \I \ra \mathcal E$
  satisfying $\pi \circ g = f$.
\end{remark}

\begin{example}
  \label{Toric}
  Let $\Mod \ra \Rng$ denote the canonical functor from the category
  of all modules over all rings to the category of rings. (The objects
  of~$\Mod$ are pairs~$(R, M)$ with~$R$ a ring and~$M$ an $R$-module.
  A morphism $(R, M) \ra (S, N)$ consists of a ring map $f \colon R
  \ra S$ and an $f$-semi-linear additive map $M \ra N$.)  This defines
  a bifibred bundle.

  A toric variety determines a functor into~$\Rng$, hence an
  adjunction bundle (cf.~Definition~\ref{Bundle}). In fact, the
  fan~$\Sigma$ of a toric variety can be regarded as a poset (ordered
  by inclusion of cones), hence as a category, and we obtain a functor
  \[\Sigma^\op \ra \Rng,\ \sigma \mapsto \cc [\check\sigma \cap M]\]
  where~$\check\sigma$ is the dual cone of~$\sigma$ and~$M$ is the
  dual lattice (see \textsc{Oda}~\cite{O} for details). Thus the toric
  variety~$X(\Sigma)$ determines the adjunction bundle $\Sigma^\op
  \bowtie_{\Rng} \Mod$.  As a more explicit example, the
  $n$-dimensional projective space is a toric variety. Its fan is
  isomorphic, as a poset, to the set of non-empty subsets of
  $[n]=\{0,1,\ldots,n\}$ (ordered by reverse inclusion).  The
  monoids~$\check\sigma\cap M=M^A$ are described in
  Example~\ref{ProjSpaces} below.
\end{example}

This example can be generalised to obtain an adjunction bundle from a
diagram of monoids and a cocomplete category~$\D$. We proceed with a
construction.
 
It is well known that we can consider any monoid~$M$ as a category
with one object and morphisms corresponding to the elements of~$M$. A
morphism of monoids then is a functor between two such categories.
Suppose that~$\D$ is a cocomplete category. We define the category of
$M$-equivariant objects in~$\D$, denoted $M \hbox {-} \D $, as the
category of functors $ M \ra \D $.  A monoid homomorphism $ f \colon M
\ra M^\prime $ induces the ``restriction'' functor $ f^* = R_f \colon
M^\prime \hbox {-} \D \ra M \hbox {-} \D $ (given by pre-composing
with~$f$). Since~$\D$ is cocomplete, this functor has a left adjoint $
f_* = \cdot \smashover {M} M^\prime \colon M \hbox {-} \D \ra M^\prime
\hbox {-} \D $.  For composable monoid homomorphisms we have the
relations $ (g \circ f)^* = f^* \circ g^* $ and $ (g \circ f)_* \iso
g_* \circ f_* $. Moreover $ \id^* = \id $, and we choose $ \id_* = \id
$.

Let~$Eq\D$ denote the category of equivariant objects in~$\D$. Objects
are the pairs $(M,\ D)$ where~$M$ is a monoid and~$D$ is a functor $M
\ra \D$. A morphism from $(M,\ D)$ to $(M^\prime,\ D^\prime)$ is a
pair~$(\alpha, \nu)$ where~$\alpha \colon M \ra M^\prime$ is a monoid
homomorphism and~$\nu$ is a natural transformation of functors $D \ra
D^\prime \circ \alpha$.  The forgetful functor $\pi \colon Eq\D \ra
Mon$ into the category of monoids make~$Eq\D$ into a bifibred bundle
in the sense of~\ref{Bundle}. The fibre over the monoid~$M$ is the
category~$M \hbox {-} \D$ of $M$-equivariant objects in~$\D$.

\begin{definition}
  \label{DefMonAdj}
  Suppose we have a (small) category~$\I$ and an $\I$-indexed
  diagram~$G$ of monoids, \ie, a functor $ G \colon \I \ra Mon $. For
  a cocomplete category~$\D$ we define the $\I $-indexed adjunction
  bundle $\mathrm{Ad}_\D G = (\C,\ F,\ U)$ by
  \[\mathrm {Ad}_\D G := \I \bowtie_{Mon} Eq\D \ .\]
  Explicitly, for an object~$i \in \I$ we let $\C_i := G(i) \hbox {-}
  \D$, the category of $G(i)$-equivariant objects in~$\D$, and for a
  morphism $\sigma \in \I$ we define $F_\sigma := G(\sigma)_*$ and
  $U_\sigma := G (\sigma)^*$.
\end{definition}

\bigskip

This definition is clearly natural in~$G$, \ie, given a natural
transformation of diagrams of monoids $G \ra G^\prime$ we obtain an
$\I$-morphism of adjunction bundles $\mathrm {Ad}_\D G^\prime \ra
\mathrm{Ad}_\D G$.

\begin{example}[Non-linear projective spaces]
  \label{ProjSpaces}
  This generalises the non-linear projective line
  (\ref{TheProjectiveLine}).  Let~$[n]$ denote the set $\{0,\ 1,\
  \ldots,\ n\}$, and write~$\langle n \rangle$ for the category of
  non-empty subsets of~$[n]$; morphisms are given by inclusion of
  sets. For $ A \subseteq [n] $, define the (additive) monoid
  \[M^A := \bigg\{(a_0,\ \ldots, a_n) \in \zz^{n+1} \,\Big|\, \sum_0^n
  a_i = 0 \hbox { and } \forall i \notin A \colon a_i \geq 0 \bigg\}\
  .\] These monoids assemble to a functor $D \colon \langle n \rangle
  \ra Mon$.  Let $Eq\DTop$ denote the category of equivariant spaces
  as constructed above. (Objects are pairs $(M, T)$ where $M$~is a
  monoid and $T$~is a pointed topological space with a base-point
  preserving \hbox{$M$-action}. Maps are semi-equivariant continuous
  maps of pointed topological spaces.) This category is a bifibred
  bundle over the category of monoids.  Thus we are in the situation
  of Definition~\ref{DefMonAdj} (with \hbox {$\I = \langle n
    \rangle$}); denote the resulting adjunction bundle
  $\mathrm{Ad}_{\Top} D = \langle n \rangle \bowtie_{Mon} Eq\DTop$ by
  $\mathfrak{P}^n$. The category of twisted diagrams $\Tw
  \bigl(\langle n\rangle, \mathfrak{P}^n\bigr)$ is nothing but the
  category~$\ppp^n$ of presheaves as defined in~\cite[6.1]{Hu1}.
\end{example}

\section{Model Structures}
\label{sec:model-structures}

\subsection{Bundles of Model Categories}

A {\it model category\/} is a category~$\C$ equipped with three classes of
distinguished morphisms, called weak equivalences, cofibrations and
fibrations. All these classes have to be closed under composition, and they
are required to contain the identity morphisms. This set of data is subject to
the following axioms:

\hangindent=8ex \hangafter 1 {\bf(MC1)} All finite limits and colimits
exist in~$\C$.

\hangindent=8ex \hangafter 1 {\bf(MC2)} If $f$ and $g$ are composable
morphisms, and if two of the three morphisms $f$, $g$ and~$g \circ f$
are weak equivalences, so is the third.

\hangindent=8ex \hangafter 1 {\bf(MC3)} The classes of weak
equivalences, cofibrations and fibrations are closed under retracts.

\hangindent=8ex \hangafter 1 {\bf(MC4)} Given a commutative square
diagram in $\C$
\begin{diagram}
  A & \rTo^f & X \\ \dTo<i && \dTo>p \\ B & \rTo_g & Y
\end{diagram}
where $i$ is a cofibration, $p$ is a fibration, and at least one
of~$i$ and~$p$ is a weak equivalence, there is a lift in the diagram,
\ie, a morphism $\ell \colon B \rTo X$ with $\ell \circ i = f$ and $p
\circ \ell = g$.

\hangindent=8ex \hangafter 1 {\bf(MC5)} Given any morphism $f$ there
is a factorisation $f = q \circ i$ where $i$ is a cofibration, and $q$
is a fibration and a weak equivalence.  Given any morphism $f$ there
is a factorisation $f = p \circ j$ where $j$ is a cofibration and a
weak equivalence, and $p$ is a fibration.

\medbreak

The term ``model category'' is always to be understood in the above
sense which is the definition given by \textsc{Dwyer} and
\textsc{Spalinski} \cite{DS}. It is slightly more general than the
definition given by \textsc{Hovey} \cite{Ho}, the differences being
the following: In \cite{Ho}, it is required that a model category has
all small limits and colimits (instead of just finite ones), and the
factorisations have to be functorial and are part of the structure
(instead of assuming that they simply exist).

\begin{definition}
  Let $\B = (\C,\ F,\ U)$ be an adjunction bundle over $\I$.  We
  call~$\B$ an {\it adjunction bundle of model categories\/} if all
  the~$\C_i$ are model categories, and all the~$F_\sigma$ preserve
  cofibrations and acyclic cofibrations. In other words, we require
  the pair $(F_\sigma ,U_\sigma)$ to form a {\author Quillen\/}
  adjoint pair.---If in addition all the~$\C_i$ are left proper model
  categories, $\B$~is called {\it left proper\/}, and similarly for
  ``right proper'' and ``proper''. Note that the inverse image of an
  adjunction bundle of model categories $\B$ is again an adjunction
  bundle of model categories, which is as proper as $\B$.
\end{definition}

\begin{example}
  The projective space bundles $ \mathfrak{P}^n$ (\ref{ProjSpaces})
  and spectra~$Sp$ (\ref{Spectra}) are examples of proper adjunction
  bundles of model categories. The model structure defined on $M
  \DTop$ (for~$M$ a monoid) has weak equivalences and fibrations on
  underlying spaces, the model structure on the category of pointed
  simplicial sets is the usual one.
\end{example}

Before defining the model structures on twisted diagrams, we make a
technical observation.

\begin{remark}
   \label{PatchingI}
   Suppose~$ \C = \prod_\nu \C_\nu $ is the product of model
   categories~$\C_\nu$.  Then there is a product model structure
   on~$\C$ where a map is a \weq{} (\resp, fibration, \resp,
   cofibration) if its image under the canonical projection is a
   \weq{} (\resp, fibration, \resp, cofibration) in~$\C_\nu$ for
   all~$\nu$ (see \cite[1.1.6]{Ho}).  If all the~$\C_\nu$ are left
   proper, $\C$ is a left proper model category, and similarly for
   ``right proper''.
\end{remark}

\subsection{The $c$-Structure}

The first model structure on~$\Tw (\I, \B) $ we want to consider has
pointwise \weqs{} and pointwise fibrations. The price one has to pay
for the simple definition of fibrations is that the description of
cofibrations is rather involved.  Moreover, we have to restrict to
``nice'' indexing categories.

\begin{definition}[Direct categories]
  \label{DefDirect}
  A {\it category with degree function\/} is a (small) category~$\I$
  together with a $\zz$-valued function~$d$, defined on the objects,
  such that whenever there is a non-identity morphism $ i \ra j $ we
  have $ d(i) \not= d(j) $. (We say that all non-identity arrows
  change the degree. In particular, objects have no non-trivial
  endomorphisms.)  The category is called {\it bounded\/} if~$d$ is
  bounded below, and it is called {\it locally bounded\/} if each
  connected component is bounded.  Without restriction, the degree of
  a bounded category has values in an honest ordinal, namely $\nn$.
  If non-identity arrows always increase the degree and the category
  is (locally) bounded, we say that~$\I$ is a {\it (locally) direct
    category\/}.
\end{definition}

All finite dimensional categories (\ie, categories with finite
dimensional nerve) admit degree functions and can be made into direct
categories. A disjoint union of locally direct categories is locally
direct.  If $\I$ is (locally) direct, so are subcategories, under and
over categories formed with~$\I$.  In particular, the full subcategory
$\I_n$ of objects of degree less than or equal to $n$ is (locally)
direct. A finite product of direct categories is direct (with degree
given by sum of partial degrees).

\medskip

In what follows, $\B = (\C, F, U)$ is an adjunction bundle of cocomplete model
categories over~$\I$.
Let~$Y$ be a twisted diagram with coefficients in~$\B$ and~$i$ an object of~$\I$. To describe
the cofibrations in the model structure we are going to construct, we have to introduce
the latching object of~$Y$ at~$i$. Recall that for a diagram~$Z$ (untwisted
case) the latching object at~$i$ is defined as the colimit over all components~$Z_j$
which map to~$Z_i$. For a twisted diagram~$Y$, we mimic this construction, using
the ``twisting'' functors~$F_\sigma$ to push everything into the category~$\C_i$.
The colimit is to be taken with respect to the $\sharp$-type structure maps of~$Y$.

Technically, we can describe the latching spaces as follows.
For each object~$ i \in \I $, let~$ \I \downarrow i $ denote the category of objects over~$i$. 
Let $\I \Downarrow i$ denote the full subcategory of $\I \downarrow i$ which consists of all
objects $\sigma \colon j \ra i$ with $\sigma \neq \id_i$. There are
canonical functors $\iota \colon \I \Downarrow i \rInto \I \downarrow i$ (the inclusion) and
$\pr \colon \I \downarrow i \ra \I$ (the projection $(\sigma\colon j\ra i) \mapsto j$). Set
$P_{\I \Downarrow i} := \pr \circ \iota$ and denote the trivial bundle over 
$\I\Downarrow i$ with value~$\C_i$ by~$\C_i$ again.
We define an $\I\Downarrow i$-morphism of bundles $\Psi \colon \C_i 
\ra (P_{\I \Downarrow i})^*\B$ as follows: For $\sigma \colon j \ra i$, the adjoint pair 
$$F_\sigma \colon \C_j \pile{\ra \\ \la} \C_i \colon U_\sigma$$
is the $\sigma$-component of $\Psi$, and it is obvious from the definitions that 
$\Psi$ is in fact a bundle morphism. Hence we have
a functor 
\[\Psi^* \colon \Tw (\I\Downarrow i,(P_{\I \Downarrow i})^*\B) \ra
\Fun(\I\Downarrow i,\C_i) \ .\]
Define $G_i \colon \Tw (\I\Downarrow i,(P_{\I \Downarrow i})^*\B) \ra
\C_i$ as the composition $G_i := \dirlim \circ \Psi^*$.

\begin{definition}
  \label{DefLatchObj}
  The {\it latching object~$L_i Y$ of~$Y$ at~$i$} is defined as $L_i Y
  := G_i\circ (P_{\I \Downarrow i})^* (Y)$. It is an object of $\C_i$.
\end{definition}

\begin{remark}
  \label{RemNatMap}
  Note that $L_i$ is a composition of functors, hence itself a
  functor. The structure maps $y^\sharp_\sigma \colon F_\sigma (Y_j)
  \ra Y_i$ for $\sigma \colon j \ra i$ define a natural transformation
  $ L_i \ra Ev_i$. If a map $L_i Y \ra Y_i$ is mentioned, it is always
  this natural map.
\end{remark}

\begin{example}
  If~$X$ is a spectrum and $n >0$, the latching object of~$X$ at~$n$
  is the pointed simplicial set $\Sigma X_{n-1}$, and the natural map
  $ \Sigma X_{n-1} \ra X_n $ of~\ref{RemNatMap} is the ($\sharp$-type)
  structure map of the spectrum.
\end{example}

\begin{example}
  Let $Y = (Y_+ \rNonsense^{y_\alpha} Y_0 \lNonsense^{y_\beta} Y_-) $
  be a twisted diagram with coefficients in the projective line bundle
  $\mathfrak{P}^1$ (cf.~\ref{TheProjectiveLine}).  The latching
  objects of~$Y$ at~$+$ and at~$-$ are the initial objects in $\nn_+
  \DTop$ and $\nn_- \DTop$, respectively. The latching object at~$0$
  is the $\zz$-equivariant pointed space $(Y_+\smashover {\nn_+}\zz)
  \vee (Y_-\smashover {\nn_-} \zz)$.  The $\sharp$-type structure maps
  induce a map to~$Y_0$.
\end{example}

\begin{definition}[The $c$-structure]
  Let $ f \colon Y \ra Z $ be a map in of twisted diagrams in $\Tw
  (\I,\B)$.  We call~$f$ a {\it weak equivalence\/} if~$f_i$ is a
  \weq{} in~$\C_i$ for every object~$ i \in \I$.  We call~$f$ a {\it
    $c$-cofibration\/} if for all objects~$i$ of~$I$, the induced map
  $\po Y_i, L_i Y, {L_i Z} \ra Z_i$ is a cofibration.  We call~$f$ a
  {\it $c$-fibration\/} if all~$f_i$ are fibrations in~$\C_i$.
\end{definition}

To prove that the $c$-structure is a model structure, we concentrate
on the lifting axiom first. Call a map $f \in \Tw (\I,\B)$ a {\it good
  acyclic $c$-cofibration\/} if for all objects~$i$ of~$I$, the
induced map $ \po Y_i, L_i Y, {L_i Z} \ra Z_i $ is an acyclic
cofibration.  Later, we will prove that the class of good acyclic
$c$-cofibrations coincides with the class of acyclic $c$-cofibrations.

\begin{lemma}
  \label{LiftingProperty}
  Let $\I$ be a direct category, and let $\B$ be an adjunction bundle
  of cocomplete model categories over $\I$.  Good acyclic
  $c$-cofibrations have the left lifting property with respect to
  $c$-fibrations. Similarly, $c$-cofibrations have the left lifting
  property with respect to acyclic $c$-fibrations.
\end{lemma}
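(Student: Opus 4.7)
The plan is to prove both statements simultaneously by induction on a degree function of~$\I$. Suppose we are given a commutative square with $f \colon Y \ra Z$ a good acyclic $c$-cofibration (\resp, a $c$-cofibration) on the left, a $c$-fibration (\resp, an acyclic $c$-fibration) $g \colon X \ra W$ on the right, and maps $u \colon Y \ra X$ and $v \colon Z \ra W$ satisfying $g \circ u = v \circ f$. I will construct the lift $\ell \colon Z \ra X$ pointwise, defining $\ell_i$ on each object~$i$ of degree~$n$ under the assumption that the lifts~$\ell_j$ have already been constructed for all~$j$ with $d(j) < n$, compatibly with all structure maps and with $\ell \circ f = u$, $g \circ \ell = v$ on such objects. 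Since~$\I$ is locally direct and the degree function on any connected component takes values in~$\nn$, this induction is well-founded, and the lifts on the individual components assemble into the desired~$\ell$.

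For the inductive step, fix~$i$ of degree~$n$. Directness forces every object $\sigma \colon j \ra i$ of~$\I \Downarrow i$ to satisfy $d(j) < n$, hence all required $\ell_j$ are in place. The maps $x^\sharp_\sigma \circ F_\sigma(\ell_j) \colon F_\sigma(Z_j) \ra X_i$ form a cone on the diagram whose colimit is~$L_i Z$ (equivalently, they assemble into the composite $L_i Z \ra L_i X \ra X_i$ via the natural transformation $L_i \ra Ev_i$ of Remark~\ref{RemNatMap}), yielding a canonical map $L_i Z \ra X_i$. This map agrees on~$L_i Y$ with $u_i$ precomposed with $L_i Y \ra Y_i$, because~$u$ is itself a map of twisted diagrams, so for every~$\sigma$ the identity $u_i \circ y^\sharp_\sigma = x^\sharp_\sigma \circ F_\sigma(u_j) = x^\sharp_\sigma \circ F_\sigma(\ell_j \circ f_j)$ holds by the induction hypothesis. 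The two maps therefore descend to a common map out of the pushout $Y_i \cup_{L_i Y} L_i Z \ra X_i$, sitting in a commutative square above~$v_i$ with the structure map $Y_i \cup_{L_i Y} L_i Z \ra Z_i$ on the left and~$g_i$ on the right. By hypothesis the left vertical is an acyclic cofibration (\resp, a cofibration) and the right vertical is a fibration (\resp, an acyclic fibration), so axiom~(MC4) in~$\C_i$ provides the desired lift $\ell_i \colon Z_i \ra X_i$.

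Finally, I verify that the family $(\ell_i)$ defines a morphism of twisted diagrams via Lemma~\ref{DiagramCond}: for each $\sigma \colon j \ra i$, the identity $\ell_i \circ z^\sharp_\sigma = x^\sharp_\sigma \circ F_\sigma(\ell_j)$ is trivial when $\sigma = \id$, while for a non-identity~$\sigma$ the map $z^\sharp_\sigma$ factors through the canonical map $L_i Z \ra Z_i$, and by construction~$\ell_i$ restricted along this canonical map to the $F_\sigma(Z_j)$-summand is precisely the composite $x^\sharp_\sigma \circ F_\sigma(\ell_j)$ used to define the pushout map. The relations $\ell_i \circ f_i = u_i$ and $g_i \circ \ell_i = v_i$ hold by construction. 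The main anticipated obstacle is not the lifting axiom itself but the bookkeeping required to reconcile the colimit presentation of~$L_i$ with the pointwise definition of a morphism in~$\Tw(\I,\B)$; this reduces to naturality of the uniqueness isomorphisms and the universal property of the colimit, in the same spirit as the proof of Lemma~\ref{InverseImageIsAFunctor}.
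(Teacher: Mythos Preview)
Your proof is correct and follows essentially the same inductive strategy as the paper's own argument: construct the lift degree by degree, at each stage using the already-defined lifts to build a map $L_i Z \ra X_i$, pass to the pushout $Y_i \cup_{L_i Y} L_i Z$, and invoke~{\bf MC4} in~$\C_i$. Your write-up is in fact slightly more explicit than the paper's in verifying that the resulting family is a morphism of twisted diagrams and that the lifting square at level~$i$ commutes; the paper dismisses this as ``straightforward''.
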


\begin{proof}
  We treat the first case only, the other is similar. Let 
  \begin{diagram}
    A       &       \lra^g  &       X       \\
    \da<f   &               &       \da>p   \\
    B & \lra_h & Y
  \end{diagram}
  be a commutative diagram in $\Tw (\I,\B)$ such that $f$~is a good
  acyclic $c$-cofibration and $p$~is a $c$-fibration. We will
  construct the desired lift by induction on the degree of objects
  of~$\I$.
        
  Since $\I$ is direct, the degree function~$d$ has a
  minimum~$k$. If~$i$ is an object in~$\I$ of degree~$k$, then
  $L_i$~is the constant functor with the initial object as value. By
  definition of a good acyclic cofibration, the map~$f_i$ is an
  acyclic cofibration in $\C_i$. Hence we can find a lift~$l_i$ in the
  following diagram:
  \begin{diagram}
    A_i                     &       \lra^{g_i}      &       X_i             \\
    \dCof<{f_i}>\sim        &       \ruDotsto_{l_i} &       \dFib>{p_i}     \\
    B_i & \lra_{h_i} & Y_i
  \end{diagram}
  Since the full subcategory~$\I_k$ of objects of degree~$k$ is
  discrete, the lifts~$l_i$ for the various~$i \in \I_k$ assemble to a
  map $l|_{\I_k} \colon B|_{\I_k} \ra X|_{\I_k}$ in $\Tw
  (\I_k,\B|_{\I_k})$.
        
  Now let $n>k$, and assume that we have constructed a lift in the diagram
  \begin{diagram}
    A|_{\I_{n-1}}           & \lra^{g|_{\I_{n-1}}}    & X|_{\I_{n-1}}         \\
    \da<{f|_{\I_{n-1}}}     & \ruDotsto>{l|_{\I_{n-1}}} & \da>{p|_{\I_{n-1}}} \\
    B|_{\I_{n-1}}           & \lra_{h|_{\I_{n-1}}}    & Y|_{\I_{n-1}}         \\
  \end{diagram}
  making it a commutative diagram in $\Tw
  (\I|_{n-1},\B|_{\I_{n-1}})$. If $i$~is an object of degree~$n$ and
  $\sigma \colon j \ra i$ an object of~$\I \Downarrow i$, the map
  \[F_\sigma(B_j) \lra^{F_\sigma(l_j)} F_\sigma(X_j)
  \lra^{x^\sharp_\sigma} X_i\]
  is part of a natural transformation $\phi \colon L_iB \ra X_i$ such
  that the following square diagram commutes:
  \begin{diagram}
    L_i A           &       \ra             &       A_i             \\
    \da<{L_if}      &                       &       \da>{g_i}       \\
    L_i B & \lra_\phi & X_i
  \end{diagram}

  \noindent Hence we get another diagram 
  \begin{diagram}
    \po A_i, L_i A, {L_i B} &       \ra             &       X_i             \\
    \dCof<\sim              &                       &       \dFib_{p_i}     \\
    B_i & \lra_{h_i} & Y_i
  \end{diagram}
  in which, by hypothesis, the left vertical map is an acyclic
  cofibration and the right vertical map is a fibration. Thus a lift
  $l_i \colon B_i \ra X_i$ exists, and it is straightforward to check
  that these maps~$l_i$, together with the morphism $ l|_{\I_{n-1}}$,
  define a map of twisted diagrams $l|_{\I_n} \colon B|_{\I_n} \ra
  X|_{\I_n}$ such that the diagram
  \begin{diagram}
    A|_{\I_{n}}       & \lra^{g|_{\I_{n}}}  &       X|_{\I_{n}}       \\
    \da<{f|_{\I_{n}}} & \ruDotsto>{l|_{\I_{n}}}     &       \da>{p|_{\I_{n}}} \\
    B|_{\I_{n}}       & \lra_{h|_{\I_{n}}}  &       Y|_{\I_{n}}       \\
  \end{diagram}
  commutes. This completes the induction.
\end{proof}

Let $\Phi \colon \I \ra \J$ be a functor and $\A$ an adjunction bundle
of cocomplete model categories over $\J$. Obviously, the functor
\[\Phi^* \colon \Tw (\J,\A) \ra \Tw (\I,\Phi^*\A)\]
preserves weak equivalences and $c$-fibrations. The question is
whether $\Phi^*$ also preserves $c$-cofibrations. Under certain
conditions (which are satisfied in the case of interest) we can give a
positive answer.

\medbreak

Suppose the functor $\Phi \colon \I \ra \J$ is {\it injective at
  identities\/}, \ie, whenever~$\Phi (\sigma)$ is an identity
morphism, so is~$\sigma$.  (For example, a faithful functor is
injective at identities.)  Then $\Phi$~induces a functor
\[\Phi \Downarrow i \colon \ \I \Downarrow i \ra \J \Downarrow
\Phi(i)\]
which sends $\sigma \colon k \ra i$ to $\Phi(\sigma) \colon \Phi(k)
\ra \Phi(i)$.  This construction is compatible with the projection
functors:
\[\Phi \circ P_{\I \Downarrow i} = P_{\J
  \Downarrow \Phi(i)} \circ \Phi \Downarrow i \ .\]

Recall that a functor $F\colon \C \ra \D$ is called {\it final\/} if
for each $A \in \D$ the category $A\downarrow F$ of objects $F$-under
$A$ is non-empty and connected.

We say that the functor~$\Phi$ satisfies the {\it finality
  condition\/} if it is injective at identities, and the functor $\Phi
\Downarrow i$ is final for all objects~$i \in \I$.

\begin{lemma}
 \label{PhiLatching}
 Let $\Phi \colon \I \ra \J$ be a functor, $\B$ an adjunction bundle
 of cocomplete model categories over $\I$ and $i$ an object of
 $\I$. Denote by $L_i$ the $i$-th latching object functor of $\Tw
 (\I,\Phi^*B)$, and by $L^\prime_{\Phi(i)}$ the $\Phi(i)$-th latching
 object functor of $\Tw (\J,\B)$.  If $\Phi$ satisfies the finality
 condition, then there is a natural isomorphism $L_i \circ \Phi^* \iso
 L^\prime_{\Phi(i)}$.
\end{lemma}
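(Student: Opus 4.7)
The plan is to exhibit $L_i \circ \Phi^*$ and $L'_{\Phi(i)}$ as colimits of two diagrams in $\D_{\Phi(i)}$, identify the former as the restriction of the latter along $\Phi \Downarrow i$, and then apply the classical fact that final functors preserve colimits. Writing $\B = (\D,\ G,\ V)$ so that $\Phi^*\B$ has $\sigma$-component $F_\sigma = G_{\Phi(\sigma)}$, I would first unfold Definition~\ref{DefLatchObj}. For $Y \in \Tw(\J,\B)$, the object $L_i(\Phi^*Y)$ is the colimit of the functor
\[D_i \colon \I \Downarrow i \ra \D_{\Phi(i)}, \qquad (\sigma \colon j \ra i) \mapsto F_\sigma\bigl((\Phi^*Y)_j\bigr) = G_{\Phi(\sigma)}(Y_{\Phi(j)}),\]
with morphism part induced by the $\sharp$-type structure maps of $\Phi^*Y$; and $L'_{\Phi(i)}(Y)$ is the colimit of
\[D'_{\Phi(i)} \colon \J \Downarrow \Phi(i) \ra \D_{\Phi(i)}, \qquad (\tau \colon k \ra \Phi(i)) \mapsto G_\tau(Y_k),\]
with morphism part induced by the $\sharp$-type structure maps of $Y$.

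Next, I would verify the identity $D_i = D'_{\Phi(i)} \circ (\Phi \Downarrow i)$ of functors $\I \Downarrow i \ra \D_{\Phi(i)}$. On objects this is immediate from the formulas above together with the compatibility $\Phi \circ P_{\I \Downarrow i} = P_{\J \Downarrow \Phi(i)} \circ (\Phi \Downarrow i)$ noted just before the lemma; on morphisms it reduces to the fact, built into Definition~\ref{DefInvDiag}, that the structure map of $\Phi^*Y$ at an arrow $\alpha$ of $\I$ is by construction the structure map of $Y$ at $\Phi(\alpha)$. Once this equality is in hand, the finality hypothesis says exactly that $\Phi \Downarrow i$ is a final functor, and the classical result on preservation of colimits by final functors (\cite[\S IX.3]{ML}) yields a canonical isomorphism
\[\dirlim D_i \iso \dirlim D'_{\Phi(i)},\]
i.e., $L_i(\Phi^*Y) \iso L'_{\Phi(i)}(Y)$. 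Naturality in $Y$ comes for free: the isomorphism is induced by the universal property of the colimits, and both $L_i \circ \Phi^*$ and $L'_{\Phi(i)}$ are manifestly functorial in $Y$.

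The only place where subtlety could sneak in is the diagram-level identification $D_i = D'_{\Phi(i)} \circ (\Phi \Downarrow i)$ on morphisms, where one must match two slightly different descriptions of the same ``inverse image'' structure map. Once that bookkeeping is disposed of, the remainder of the proof is a direct appeal to the cofinality theorem and the universal property of colimits.
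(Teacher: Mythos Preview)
Your proposal is correct and follows essentially the same route as the paper: both arguments identify the diagram computing $L_i(\Phi^*Y)$ as the restriction along $\Phi \Downarrow i$ of the diagram computing $L'_{\Phi(i)}(Y)$, and then invoke the cofinality theorem \cite[IX.3.1]{ML}. The only difference is cosmetic---you unfold the diagrams $D_i$ and $D'_{\Phi(i)}$ explicitly, whereas the paper phrases the same identity in terms of the functors $\Psi^*$, $\Theta^*$, and $(\Phi \Downarrow i)^*$.
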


\begin{proof}
  The functor $L_i$ is defined as the composition $\dirlim \circ \Psi^*
  \circ P_{\I \Downarrow i}^*$, with $\Psi$ being an $\I \Downarrow
  i$-morphism with $\sigma$-component given by the adjunction
  \[F_{\Phi(\sigma)} \colon C_{\Phi(j)} \pile{\ra \\ \la} \C_{\Phi(i)}
  \colon U_{\Phi(\sigma)}\]
  where $\sigma \colon j \ra i$ is an object of $\I \Downarrow i$. On
  the other hand, $L^\prime_{\Phi(i)}$ is the composition
  $L^\prime_{\Phi(i)} = \dirlim \circ \Theta^* \circ P_{J\Downarrow
    \Phi(i)}^*$, with $\Theta$ having the $\tau$-component given by
  the adjunction
  \[F_{\tau} \colon C_{j} \pile{\ra \\ \la} \C_{\Phi(i)} \colon
  U_{\tau}\]
  where $\tau \colon j \ra \Phi(i)$ is an object of $\J \Downarrow
  \Phi(i)$.  It is straightforward to check that the equality $L_i
  \circ \Phi^* = \dirlim \circ (\Phi \Downarrow i)^* \circ \Theta^*
  \circ P_{J\Downarrow \Phi(i)}^*$ holds. Hence the $i$-th latching
  object of $\Phi^*(A)$ is given by
  \[L_i (\Phi^* (A)) = \dirlim \circ (\Theta^* \circ P_{J\Downarrow
    \Phi(i)}^* (A)) \circ (\Phi \Downarrow i) \ .\]
  The functor~$\Phi \Downarrow i $ induces a map $L_i (\Phi^*(A)) \ra
  L^\prime_{\Phi(i)} (A)$ which is an isomorphism by \cite[IX.3.1]{ML}
  since $\Phi \Downarrow i$ is final.
\end{proof}

\begin{corollary}
 \label{FinalityCondition}
 If~$\Phi$ satisfies the finality condition, then $\Phi^*$~preserves
 $c$-cofibrations and good acyclic $c$-cofibrations.
\end{corollary}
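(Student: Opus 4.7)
The plan is to reduce everything to Lemma~\ref{PhiLatching} together with the observation that the isomorphism there is compatible with the natural maps to the evaluation functors. Fix a map $f \colon Y \ra Z$ in $\Tw(\J,\A)$ and an object $i \in \I$. By the definition of $\Phi^*$ we have $(\Phi^* Y)_i = Y_{\Phi(i)}$ and $(\Phi^* f)_i = f_{\Phi(i)}$. Applying Lemma~\ref{PhiLatching} (which requires the finality condition on $\Phi$) gives natural isomorphisms
\[L_i(\Phi^* Y) \iso L'_{\Phi(i)}(Y) \qquad\hbox{and}\qquad L_i(\Phi^* Z) \iso L'_{\Phi(i)}(Z)\]
in $\C_{\Phi(i)}$.

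Next I would argue that these isomorphisms fit into a commutative diagram with the natural transformations of Remark~\ref{RemNatMap}, \ie, the squares
\begin{diagram}
L_i(\Phi^* Y) & \ra & (\Phi^* Y)_i \\
\da<\iso & & \dEqual \\
L'_{\Phi(i)}(Y) & \ra & Y_{\Phi(i)} \\
\end{diagram}
commute (and similarly for $Z$). This is essentially by construction: both natural maps are induced by the $\sharp$-type structure maps $y_\sigma^\sharp \colon F_\sigma(Y_j) \ra Y_{\Phi(i)}$ indexed by $\sigma \colon j \ra \Phi(i)$ in $\J \Downarrow \Phi(i)$, and the identification in Lemma~\ref{PhiLatching} is precisely the canonical comparison induced by $\Phi \Downarrow i$, which intertwines the two cocones.

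Passing to pushouts, these compatibilities identify the map
\[(\Phi^* Y)_i \cup_{L_i(\Phi^* Y)} L_i(\Phi^* Z) \lra (\Phi^* Z)_i\]
with the map
\[Y_{\Phi(i)} \cup_{L'_{\Phi(i)} Y} L'_{\Phi(i)} Z \lra Z_{\Phi(i)}\]
up to a canonical isomorphism of the source. If $f$ is a $c$-cofibration (resp.\ good acyclic $c$-cofibration), the latter map is a cofibration (resp.\ acyclic cofibration) in $\C_{\Phi(i)}$ by hypothesis; hence so is the former, for every $i \in \I$. This gives the required conclusion.

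The only mildly delicate point is the compatibility of the isomorphism of Lemma~\ref{PhiLatching} with the natural map to the evaluation functor; everything else is formal. I expect this compatibility to be an immediate unpacking of the definitions of $\Psi^*$, $\Theta^*$ and the cofinality isomorphism of \cite[IX.3.1]{ML}, and would record it as a one-line remark rather than a separate lemma.
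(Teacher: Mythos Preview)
Your proof is correct and follows exactly the paper's approach: the paper's own proof is the single sentence ``This follows from~\ref{PhiLatching} since the maps $L_i (\Phi^* A) \ra A_{\Phi(i)}$ and $L^\prime_{\Phi(i)} A \ra A_{\Phi(i)}$ correspond under the isomorphism,'' which is precisely the compatibility you spell out in detail. Your identification of the compatibility with the natural map to evaluation as the ``only mildly delicate point'' matches the paper's emphasis exactly.
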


\begin{proof}
  This follows from~\ref{PhiLatching} since the maps $L_i (\Phi^* A)
  \ra A_{\Phi(i)}$ and $L^\prime_{\Phi(i)} A \ra A_{\Phi(i)}$
  correspond under the isomorphism.
\end{proof}

\begin{remark}
  \label{Remark}
  The functor $P_{\I \Downarrow i}$ satisfies the finality condition
  because $(P_{\I \Downarrow i}) \Downarrow \alpha$ is an isomorphism
  of categories for each object $\alpha \in \I \Downarrow i$.
\end{remark}

\begin{lemma}
  \label{LatchingPreservesCofibrations}
  Let $\I$ be direct.  For each $i \in \I$, the latching object
  functor $L_i$ maps $c$-cofibrations to cofibrations and good acyclic
  $c$-cofibrations to acyclic cofibrations.
\end{lemma}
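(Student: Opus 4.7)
The plan is to factor $L_i$ through the functor category $\Fun(\I \Downarrow i, \C_i)$ equipped with its direct \textsc{Reedy\/} model structure, reducing the claim to two ingredients: that a $c$-cofibration induces a \textsc{Reedy\/} cofibration of the intermediate diagrams, and that the colimit functor on a direct \textsc{Reedy\/} category is left \textsc{Quillen}.

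By construction $L_i = \dirlim \circ \Psi^* \circ P_{\I \Downarrow i}^*$ already factors through $\Fun(\I \Downarrow i, \C_i)$: the intermediate functor sends a twisted diagram $Y$ to the $\C_i$-valued diagram $D^Y$ with $D^Y(\sigma \colon j \ra i) = F_\sigma (Y_j)$. Because $\I$ is direct, the slice $\I \Downarrow i$ is again direct (degrees are bounded above by $d(i) - 1$ and below by the bound on $\I$), so $\Fun(\I \Downarrow i, \C_i)$ carries the direct \textsc{Reedy\/} model structure, in which fibrations and weak equivalences are pointwise and cofibrations are the usual latching cofibrations.

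The key step is the identification of the \textsc{Reedy\/} latching object of $D^Y$ at an object $\sigma \colon j \ra i$. A non-identity arrow ending at $\sigma$ in $\I \Downarrow i$ corresponds to a non-identity morphism $\alpha \colon k \ra j$ in $\I$ together with the composite $\sigma \alpha$; hence the indexing category is $\I \Downarrow j$. Using the uniqueness isomorphism $F_{\sigma \alpha} \iso F_\sigma \circ F_\alpha$ and the fact that $F_\sigma$ preserves colimits as a left adjoint, this latching (indexed over $\alpha \in \I \Downarrow j$) is
\[\dirlim F_{\sigma \alpha}(Y_k) \iso F_\sigma \bigl( \dirlim F_\alpha (Y_k) \bigr) = F_\sigma (L_j Y) \ .\]
Consequently, the \textsc{Reedy\/} latching condition at $\sigma$ for the natural transformation $D^f \colon D^Y \ra D^Z$ induced by a $c$-cofibration $f$ becomes the demand that
\[F_\sigma \bigl( \po Y_j, L_j Y, {L_j Z} \bigr) \ra F_\sigma (Z_j)\]
(using again that $F_\sigma$ preserves pushouts) be a cofibration in $\C_i$. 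But $\po Y_j, L_j Y, {L_j Z} \ra Z_j$ is a cofibration in $\C_j$ by the $c$-cofibration hypothesis, and $F_\sigma$ is left \textsc{Quillen\/} by the definition of an adjunction bundle of model categories. The identical reasoning, with ``cofibration'' replaced throughout by ``acyclic cofibration'', handles good acyclic $c$-cofibrations.

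To conclude, I would observe that $\dirlim \colon \Fun(\I \Downarrow i, \C_i) \ra \C_i$ is left adjoint to the constant-diagram functor, which visibly sends (acyclic) fibrations in $\C_i$ to pointwise, hence \textsc{Reedy\/}, (acyclic) fibrations. Thus $\dirlim$ is left \textsc{Quillen\/} for the direct \textsc{Reedy\/} structure, and applying it to the \textsc{Reedy\/} (acyclic) cofibration $D^f$ yields the desired (acyclic) cofibration $L_i Y \ra L_i Z$. I expect the main obstacle to be the \textsc{Reedy\/}-latching identification with $F_\sigma(L_j Y)$: it is a short but careful diagram chase in the spirit of Lemma~\ref{DiagramCond} and Proposition~\ref{Completeness}, and once it is in hand the remainder is formal.
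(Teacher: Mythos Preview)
Your argument is correct, but it is organised differently from the paper's. The paper factors $L_i = G_i \circ (P_{\I \Downarrow i})^*$ and stays entirely inside the twisted framework: it uses the finality machinery (Remark~\ref{Remark} and Corollary~\ref{FinalityCondition}) to see that $(P_{\I \Downarrow i})^*$ preserves $c$-cofibrations in the twisted sense, and then observes that $G_i$ is left adjoint to $V_i = \Psi_* \circ \delta$, which clearly sends (acyclic) fibrations to pointwise (acyclic) fibrations; the just-proved Lemma~\ref{LiftingProperty} (applied to the bundle $(P_{\I\Downarrow i})^*\B$ over the direct category $\I\Downarrow i$) plus adjunction then finishes. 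Your route instead passes to the \emph{untwisted} functor category $\Fun(\I\Downarrow i,\C_i)$, explicitly identifies the Reedy latching object of $D^Y$ at~$\sigma$ with $F_\sigma(L_jY)$ (your isomorphism $(\I\Downarrow i)\Downarrow\sigma \cong \I\Downarrow j$ is precisely the content of Remark~\ref{Remark}), and then invokes the classical direct Reedy model structure to conclude that $\dirlim$ is left \textsc{Quillen}. The paper's approach is more self-contained (it needs no external Reedy result, only Lemma~\ref{LiftingProperty}), while yours is more transparent about \emph{why} the latching condition propagates, at the cost of importing the untwisted Reedy theory from~\cite{Ho}; note that your last step really uses that Reedy cofibrations have the left lifting property against pointwise acyclic fibrations, which is exactly the untwisted special case of Lemma~\ref{LiftingProperty}.
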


\begin{proof}
  Recall that $L_i$ was defined as the composite $G_i \circ (P_{\I
    \Downarrow i})^*$.  By Remark~\ref{Remark} and
  Corollary~\ref{FinalityCondition}, we are left to show that $G_i$ maps
  $c$-co\-fibra\-tions to cofibrations and good acyclic $c$-cofibrations
  to acyclic cofibrations. However, $G_i$ has a right adjoint $V_i :=
  \Psi_* \circ \delta $, where $\delta \colon \C_i \ra
  \Fun(\I\Downarrow i, \C_i)$ denotes the constant diagram functor and
  $\Psi_*$ is the direct image under the $\I\Downarrow i$-morphism
  $\Psi$ having $\sigma$-component
  \[F_\sigma \colon C_j \pile{\ra \\ \la} \C_i \colon U_\sigma\]
  where $\sigma \colon j \ra i$ is an object of $\I \Downarrow i$.  It
  is easy to see that $V_i$~maps (acyclic) fibrations to (acyclic)
  $c$-fibrations. Hence the statement follows from
  Lemma~\ref{LiftingProperty} and the fact that $\C_i$~is a model
  category.
\end{proof}

\begin{corollary}
  \label{LevelCofibrations}
  If~$f$ is a (good acyclic) $c$-cofibration, all its components are
  (acyclic) cofibrations in their respective categories. In
  particular, a good acyclic $c$-cofibration is an acyclic
  $c$-cofibration.
\end{corollary}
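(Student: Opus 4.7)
The plan is to factor each component $f_i \colon Y_i \to Z_i$ through the pushout and use Lemma~\ref{LatchingPreservesCofibrations} on the latching-object side together with the defining property of (good acyclic) $c$-cofibrations on the other side.

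More precisely, for each object $i\in\I$ I would write $f_i$ as the composition
\[Y_i \lra Y_i \cup_{L_i Y} L_i Z \lra Z_i,\]
where the first arrow is obtained by pushing out the natural map $L_i Y \ra L_i Z$ (\ie, $L_i f$) along the natural map $L_i Y \ra Y_i$ from Remark~\ref{RemNatMap}, and the second arrow is the canonical map into $Z_i$. The second arrow is a cofibration (resp.\ acyclic cofibration) by the very definition of $c$-cofibration (resp.\ good acyclic $c$-cofibration). For the first arrow, I would invoke Lemma~\ref{LatchingPreservesCofibrations}: it guarantees that $L_i f \colon L_i Y \ra L_i Z$ is a cofibration (resp.\ acyclic cofibration) in $\C_i$, and cofibrations/acyclic cofibrations are preserved under pushout in any model category. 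Hence $f_i$ is a composition of two cofibrations (resp.\ acyclic cofibrations), and so is itself a cofibration (resp.\ acyclic cofibration) in~$\C_i$.

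For the ``in particular'' clause, suppose $f$ is a good acyclic $c$-cofibration. By the first part, every component $f_i$ is an acyclic cofibration in $\C_i$, and in particular a weak equivalence, so $f$ is a weak equivalence in the sense of the $c$-structure. Moreover, since each pushout-to-$Z_i$ map is an acyclic cofibration by assumption, it is in particular a cofibration, so $f$ is a $c$-cofibration. Combining these, $f$ is an acyclic $c$-cofibration.

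The only point that is not completely formal is the appeal to Lemma~\ref{LatchingPreservesCofibrations}, but that has already been established; everything else is standard pushout/composition stability in the individual model categories~$\C_i$, so there is no real obstacle.
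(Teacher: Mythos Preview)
Your proof is correct and is essentially identical to the paper's own argument: factor $f_i$ through the pushout $Y_i \cup_{L_i Y} L_i Z$, use Lemma~\ref{LatchingPreservesCofibrations} plus pushout-stability for the first factor, and the definition of (good acyclic) $c$-cofibration for the second. Your handling of the ``in particular'' clause is slightly more explicit than the paper's, but the content is the same.
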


\begin{proof}
  Let $f \colon A \ra B$ be a $c$-cofibration. By
  Lemma~\ref{LatchingPreservesCofibrations}, the map $L_i f \colon L_i
  A \ra L_i B$ is a cofibration in $\C_i$, hence $A_i \ra
  A_i\cup_{L_iA}L_iB$ is a cofibration. Observe that $f_i$~factors as
  this last map followed by the map $A_i \cup_{L_i A} L_i B \ra B_i$. Since
  the latter is a cofibration by hypothesis, we conclude that~$f_i$ is
  a cofibration.---The other case is similar.
\end{proof}

\begin{theorem}
  \label{ThmCStruc}
  Suppose $\I$ is a locally direct category, and $\B$ is an adjunction
  bundle of cocomplete model categories over $\I$.
  \begin{enumerate}
  \item The $c$-structure is a model structure.
  \item A map $f$ of twisted diagrams is an acyclic $c$-cofibration if
    and only if for all objects $i \in \I$, the induced map $ \po Y_i,
    L_i Y, {L_i Z} \ra Z_i $ is an acyclic cofibration in $\C_i$.
  \item If $\mathfrak{B\/}$ is a left (\resp, right) proper bundle, the
    $c$-structure is left (\resp, right) proper.
  \end{enumerate}
\end{theorem}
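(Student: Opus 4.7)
The plan is to verify the model category axioms MC1--MC5 together with the characterisation~(2) in an interleaved fashion dictated by logical dependence, and then to deduce~(3) from pointwise properness.

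Axiom MC1 follows from Proposition~\ref{Completeness}, since finite limits and colimits in $\Tw(\I,\B)$ are computed pointwise and each $\C_i$ satisfies MC1. Axiom MC2 and the retract axiom MC3 for weak equivalences and $c$-fibrations are immediate from the pointwise definitions. For $c$-cofibrations, the functor $(Y \to Z) \mapsto (Y_i \cup_{L_i Y} L_i Z \to Z_i)$ on the arrow category preserves retracts (because $L_i$ is a functor and colimits preserve retracts), so a retract of a $c$-cofibration is again a $c$-cofibration. Half of MC4, namely the lifting of $c$-cofibrations against acyclic $c$-fibrations, is supplied directly by Lemma~\ref{LiftingProperty}; the other half will be deduced once~(2) is in place.

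The main work is MC5, which I prove by induction on the degree of $\I$, reducing to the bounded case one connected component at a time. For a map $f \colon Y \to Z$, suppose a factorization $Y|_{\I_{n-1}} \to W|_{\I_{n-1}} \to Z|_{\I_{n-1}}$ has been constructed. For an object $i$ of degree~$n$, the latching object $L_i W$ depends only on values of $W$ on $\I_{n-1}$, so there is a canonical map $P_i := Y_i \cup_{L_i Y} L_i W \lra Z_i$ in $\C_i$. Applying MC5 in $\C_i$, I factor this as a cofibration $P_i \to W_i$ followed by an acyclic fibration $W_i \to Z_i$. The $\sharp$-type structure maps $w_\sigma^\sharp$ at $i$ are defined as the composites $F_\sigma W_j \to L_i W \to P_i \to W_i$ for $\sigma \colon j \to i$. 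The twisted diagram axiom for $W$ at the pair $(\sigma, \tau)$ is a composite equality in $\C_i$; Lemma~\ref{DiagramCond} reduces it to a statement about the $\sharp$-type maps, and that statement follows from the universal property of the colimit defining $L_i W$ together with the inductive hypothesis applied inside $\I_{n-1}$. The resulting map $Y \to W$ is a $c$-cofibration by construction (its defining pushout map is literally the chosen cofibration in $\C_i$), and $W \to Z$ is pointwise an acyclic fibration. The trivial cofibration/fibration factorization is proved identically using the other half of MC5 in $\C_i$. The main obstacle is the bookkeeping needed to verify compatibility of the new structure maps with those inherited from stage $n-1$, which hinges on the fact that $L_i$ only sees objects of strictly smaller degree and so is determined by the inductive data.

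For~(2), one inclusion is Corollary~\ref{LevelCofibrations}. For the converse, given an acyclic $c$-cofibration $f \colon A \to B$, use MC5 to factor $f = h \circ g$ with $g \colon A \to W$ a good acyclic $c$-cofibration and $h \colon W \to B$ a $c$-fibration; two-out-of-three makes $h$ an acyclic $c$-fibration. Lemma~\ref{LiftingProperty} then provides a lift $r \colon B \to W$ with $r \circ f = g$ and $h \circ r = \id_B$, exhibiting $f$ as a retract of $g$ in the arrow category. Since the construction $(Y \to Z) \mapsto (Y_i \cup_{L_i Y} L_i Z \to Z_i)$ preserves retracts, the pushout map for $f$ is a retract of the corresponding acyclic cofibration for $g$, hence itself an acyclic cofibration in $\C_i$. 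This establishes~(2), which in turn supplies the outstanding half of MC4 via Lemma~\ref{LiftingProperty}.

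Finally, for~(3), Corollary~\ref{LevelCofibrations} shows that every $c$-cofibration has pointwise cofibration components (dually, every $c$-fibration is a pointwise fibration by definition). Since pushouts and pullbacks in $\Tw(\I,\B)$ are computed pointwise by Proposition~\ref{Completeness}, pointwise left (resp.\ right) properness of $\B$ transfers directly to the $c$-structure.
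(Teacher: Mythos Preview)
Your proof is correct and follows essentially the same route as the paper: reduction to the direct case via connected components, pointwise verification of MC1--MC3, the inductive construction of MC5 factorisations via latching objects, the retract argument for~(2) using the good-acyclic-cofibration/fibration factorisation, and the deduction of MC4 and~(3). The only cosmetic difference is that you spell out the (cofibration, acyclic fibration) factorisation while the paper spells out the other one; both declare the remaining case analogous.
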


\begin{proof}
  Let $ (\I_\nu) $ denote the family of path components of~$\I$. Then
  $ \I = \coprod \I_\nu $, and each of the~$ \I_\nu $ is a direct
  category.  Since $ \Tw (\I,\B) = \prod_\nu \Tw (\I_\nu,\B|_{\I_\nu})
  $, it is enough to show that the $c$-structure is a model structure
  for each of the categories $ \Tw (I_\nu,\B|_{\I_\nu}) $;
  by~\ref{PatchingI} we can equip $ \Tw (\I,\B) $ with the product model
  structure. Consequently, we can assume that $\I$~is direct.

  We use the axioms for model categories as given in~\cite{DS}.  First
  we note that the class of \weqs{} is closed under com\-po\-si\-tion
  since \weqs{} are defined pointwise. Similarly, the composition of
  two \hbox {$c$-fibrations} is a $c$-fibration again.

  Now assume we have two composable $c$-cofibrations $ A \lra^f B
  \lra^g C $.  To show that~$ g \circ f $ is a $c$-cofibration, we
  have to prove that for all objects $ i \in \I $ the induced map
  \[\po A_i, L_i A, {L_i C} \ra C_i\]
  is a cofibration in~$ \C_i $. But we can factor this map as
  \begin{eqnarray*}
    \po A_i, L_i A, {L_i C}
    &\lra^\iso& \po {\po A_i, L_i A, {L_i B}}, L_i B, {L_i C} \\
    &\lra[l>=3em]^x& \po B_i, L_i B, {L_i C} \\
    &\lra[l>=3em]^y& C_i
  \end{eqnarray*}
  where~$x$ is induced by~$f$, and~$y$ is induced by~$g$. But both of
  these maps are cofibrations (since they are cobase changes of
  cofibrations), hence so is their composite.

  It is obvious that each of the classes above contains all
  identities.

  Axiom~{\bf MC1}: existence of finite limits and colimits is
  guaranteed by~\ref{Completeness} since they exist in all~$ \C_i $.

  Axiom~{\bf MC2}: the ``2-of-3'' property for \weqs{} is satisfied
  since \weqs{} are defined pointwise and {\bf MC2} holds in all the
  categories~$ \C_i $.

  Axiom~{\bf MC3}: the class of \weqs{} is closed under retracts since
  \weqs{} are defined pointwise, and in each category~$ \C_i $ a
  retract of a \weq{} is a \weq{}.  Similarly, the class of fibrations
  is closed under retracts.
        
  Suppose~$ g \colon Y \ra Z $ is a retract of~$ f \colon A \ra B $
  and~$f$ is a $c$-cofibration. We have to show that for all objects $
  i \in \I_n $, the map $ \po L_i Z, L_i Y, {Y_i} \ra Z_i $ induced
  by~$g$ is a cofibration in~$ \C_i $.  But by functoriality of
  pushouts and latching objects, this map is a retract of the map $
  \po L_i B, L_i A, {A_i} \ra B_i $ induced by~$f$, which is a
  cofibration by hypothesis. Since~{\bf MC3} is valid in~$ \C_i $, the
  former map is a cofibration.  Hence~$g$ is a $c$-cofibration as
  claimed. This argument also shows that the class of good acyclic
  $c$-cofibrations is closed under retracts.

  Axiom~{\bf MC5}: let $f \colon A \ra X$ be a map in $\Tw
  (\I,\B)$. We will construct inductively a factorisation of~$f$ as a
  good acyclic $c$-cofibration followed by a $c$-fibration. (The other
  factorisation axiom is proved in a similar manner).  Let~$k$ be the
  minimum of the degree function on~$\I$, and let~$i$ be of
  degree~$k$. Then $f_i$ factors in $\C_i$ as $A_i \rCof^{g_i}_\sim
  T_i \rFib^{p_i} X_i$, with $g_i$ being an acyclic cofibration and
  $p_i$ being a fibration. The collection of these factorisations
  (where~$i$ ranges through all objects of degree~$k$) yields a
  factorisation of $f|_{\I_k}$ in $\Tw (\I_k,\B|_{\I_k})$ as
  $g|_{\I_k} \colon A|_{\I_k} \ra T|_{\I_k}$ followed by $p|_{\I_k}
  \colon T|_{\I_k} \ra X|_{\I_k}$.

  Let $n>k$, and assume we have already constructed a factorisation of
  $f|_{\I_{n-1}}$ in $\Tw (\I_{n-1},\B|_{\I_{n-1}})$ as the composite
  \[A|_{\I_{n-1}} \lra^{g|_{\I_{n-1}}} T|_{\I_{n-1}}
  \lra^{p|_{\I_{n-1}}} X|_{\I_{n-1}} \ .\]
  Let~$i$ be of degree~$n$.  The canonical functor $P_{\I\Downarrow i}
  \colon \I\Downarrow i \ra \I$ factors through the inclusion $\Phi
  \colon \I_{n-1} \rInto \I$ as $\Theta \colon \I \Downarrow i \ra
  \I_{n-1}$ since~$\I$ is direct. Recall the functor
  \[G_i \colon \Tw (\I\Downarrow i,(P_{\I\Downarrow i})^*\B) \ra
  \C_i\]
  appearing in the definition of the $i$-th latching object
  functor~$L_i$ (\ref{DefLatchObj}).  By definition, $L_i = G_i \circ
  P_{\I \Downarrow i} = G_i \circ \Theta^* \circ \Phi^*$, hence $G_i
  \circ \Theta^* (A|_{\I_{n-1}}) = L_i A$. The maps $F_\sigma (T_j)
  \lra^{F_\sigma (p_j)} F_\sigma (X_j) \lra^{x^\sharp_\sigma} X_i$ for
  the different objects $\sigma \colon j \ra i$ of $\I \Downarrow i$
  induce a map $G_i \circ \Theta^*(T|_{\I_{n-1}}) \ra X_i$ which makes
  the diagram
  \begin{diagram}[l>=3em]
    G_i \circ \Theta^*(A|_{\I_{n-1}})       &       =L_i A  &       \ra     &       A_i     \\
    \da<{G_i \circ \Theta^*(g|_{\I_{n-1}})} &               &               &       \da_{f_i}\\
    G_i \circ \Theta^* (T|_{\I_{n-1}}) & & \ra & X_i
  \end{diagram}
  commute. Now factor the induced map $A_i\cup_{L_i(A)}(G_i \circ
  \Theta^*)(T|_{\I_{n-1}}) \ra X_i$ as an acyclic cofibration $h_i
  \colon A_i \cup_{L_i(A)}(G_i \circ \Theta^*)(T|_{\I_{n-1}})
  \rCof^\sim T_i$ followed by a fibration $p_i \colon T_i \rFib X_i$
  in $\C_i$. The collection of the $T_i$ for the different objects $i$
  of degree $n$, together with $T|_{\I_{n-1}}$ define a twisted
  diagram in $\Tw (\I_n,\B|_{\I_n})$. The new structure maps for
  $\sigma \colon j \ra i$ are the compositions
  \[F_\sigma (T_j) \ra G_i \circ \Theta^*(T|_{\I_{n-1}}) \ra A_i
  \cup_{L_i(A)}(G_i \circ \Theta^*)(T|_{\I_{n-1}}) \rCof^{h_i}_\sim
  T_i\]
  where the first two maps are the canonical ones. If we define~$g_i$
  as the composition of the canonical map $A_i \ra A_i
  \cup_{L_i(A)}(G_i \circ \Theta^*)(T|_{\I_{n-1}})$ with $h_i$, it is
  straightforward to check that we get a factorisation $ f|_{\I_n} =
  p|_{\I_n} \circ g|_{\I_n} $ in $\Tw (\I_n,\B|_{\I_n}) $.  This
  completes the induction.
        
  We end up with a factorisation of~$f$ as $A \lra^g T \lra^p X$.  The
  object~$T|_{\I_n}$ we constructed in the induction step coincides
  with the restriction of~$T$, and similarly for the maps~$g$
  and~$p$. It is clear that~$p$ is a $c$-fibration in $\Tw (\I,\B)$.
  To complete the proof of axiom {\bf MC5}, it remains to show that
  the map~$g$ is a good acyclic $c$-cofibration. However, if~$i$ is of
  degree $k = \min d$, the map $A_i \cup_{L_i A} L_iT = A_i \lra^{g_i}
  T_i$ is an acyclic cofibration in~$\C_i$, and if~$i$ is of degree~$n
  > k$, the map $A_i \cup_{L_i A} L_iT \ra T_i$ coincides with the map
  $h_i \colon A_i \cup_{L_i(A)}(G_i \circ \Theta^*)(T|_{\I_{n-1}}) \ra
  T_i$ which is an acyclic cofibration in~$\C_i$. Hence~$g$ is a good
  acyclic $c$-cofibration.

  \medbreak

  We prove part~(2) of the theorem. We have already seen that every
  good acyclic $c$-cofibration is an acyclic $c$-cofibration
  (\ref{LevelCofibrations}).  To prove the converse, let $f \colon A \ra
  X$ be an acyclic $c$-cofibration. Factor~$f$ as a good acyclic
  $c$-cofibration $g \colon A \ra T$ followed by a $c$-fibration $p
  \colon T \ra X$, and note that~$p$ is an acyclic $c$-fibration by
  axiom~{\bf MC2}. The map~$f$ is in particular a $c$-cofibration, so
  we can find a lift in the diagram
  \begin{diagram}
    A       &       \lra^g          &       T       \\
    \da<f   &                       &       \da>p   \\
    X & \lra_{\id_X} & X
  \end{diagram}
  which expresses~$f$ as a retract of~$g$. Since good acyclic
  $c$-cofibrations are closed under retracts, we are done.

  \medbreak

  Knowing~(2), we see that axiom~{\bf MC4} is an immediate consequence
  of Lemma~\ref{LiftingProperty}. This finishes the proof of~(1).

  \medbreak

  Finally, recall from Proposition~\ref{Completeness} that pushouts and
  pullbacks are calculated pointwise. Since the components of a \weq{}
  ($c$-fibration, $c$-cofibration) are \weqs{} (fibrations,
  cofibrations) in the respective categories (use
  Corollary~\ref{LevelCofibrations} for the $c$-cofibrations),
  assertion~(3) follows.
\end{proof}

\begin{remark}
  The definition of a direct category can be extended to more general
  degree functions having ordinals as values, cf.~\cite{Ho}.
  The two inductive proofs of~\ref{LiftingProperty}
  and~\ref{ThmCStruc} can be completed with a discussion of the
  ``limit ordinal case'', thus giving the $c$-structure for a larger
  class of indexing categories.
\end{remark}

\subsection{The $f$-Structure}

The construction of the $c$-structure can be dualised. There is a
notion of a (locally) inverse category, and matching objects allow us
to define an $f$-structure with pointwise cofibrations and \weqs{}.
In the following, let $\B = (\C,\ F,\ U)$ be an adjunction bundle of
complete model categories over~$\I$. Denote by $i \Downarrow \I $ the
full subcategory of the under category $i \downarrow \I$ consisting of
objects $\sigma \colon i \ra j$ with $\sigma \neq \id_i$.  Again we
have a canonical functor $\Phi \colon i \Downarrow \I \ra
\I$. Consider~$\C_i$ as a trivial bundle over $i \Downarrow \I$, and
let $\Psi \colon \Phi^*\B \ra \C_i$ be the $i \Downarrow \I$-morphism
of bundles with $\sigma$-component given by the adjunction
\[F_\sigma \colon \C_i \pile{\ra \\ \la} \C_j \colon U_\sigma\]
for $\sigma \colon i \ra j$. Define $H_i \colon \Tw (i \Downarrow
\I,\Phi^*\B) \ra \C_i$ as the composition $\invlim \circ \Psi_*$. In
fact, $H_i$~coincides with the direct image functor~$\Xi_*$
where~$\Xi$ is the bundle morphism given by the pair $(\Psi,
i\Downarrow \I \ra \{i\})$ (here~$\{i\}$ is the trivial category).

\begin{definition}
  Let~$Y$ be a twisted diagram with coefficients in~$\B$.  The {\it
    matching object of~$Y$ at~$i$} is defined as $M_i Y := H_i \circ
  \Phi^* (Y)$.
\end{definition}

\begin{remark}
  The structure maps $y^\flat_\sigma \colon Y_i \ra U_\sigma (Y_j)$
  for $\sigma \colon i \ra j$ define a natural transformation $ Ev_i
  \ra M_i$. If a map $Y_i \ra M_i Y$ is mentioned, it is always this
  natural map.
\end{remark}

\begin{definition}[The $f$-structure]
  Let $ f \colon Y \ra Z $ be a map of twisted diagrams in $\Tw
  (\I,\B)$.  We call~$f$ a {\it \weq{}\/} if~$f_i$ is a \weq{}
  in~$\C_i$ for every object~$ i \in \I$.  We call~$f$ an {\it
    $f$-fibration\/} if for all objects~$i \in I$, the induced map $
  Y_i \ra \pb Z_i, M_i Z, {M_i Y} $ is a fibration.  We call~$f$ an
  {\it $f$-cofibration\/} if all~$f_i$ are cofibrations in~$\C_i$.
\end{definition}

\begin{definition}
  \label{DefInverse}
  A category with degree function is called a {\it (locally) inverse
    category\/} if its opposite category (with the same degree
  function) is (locally) direct (\ref{DefDirect}).
\end{definition}

\begin{theorem}
  Suppose $\I$ is a locally inverse category, and $\B$ is an
  adjunction bundle of complete model categories over $\I$.
  \begin{enumerate}
  \item The $f$-structure is a model structure.
  \item If~$f$ is an $f$-fibration, all its components are fibrations
    in their respective categories.
  \item A map $f \colon Y \ra Z $ of twisted diagrams is an acyclic
    $f$-fibration if and only if for all objects $ i \in \I $, the
    induced map
    \[Y_i \ra \pb Z_i, M_i Z, {M_i Y}\]
    is an acyclic fibration in~$\C_i$.
  \item If~$\B$ is a left \resp right proper bundle, the $f$-structure
    is left \resp right proper.
  \end{enumerate}
  \qed
\end{theorem}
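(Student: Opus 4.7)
The plan is to dualise the proof of Theorem~\ref{ThmCStruc} throughout. First, by Remark~\ref{PatchingI} and the decomposition $\Tw(\I,\B) = \prod_\nu \Tw(\I_\nu, \B|_{\I_\nu})$ into path components, we reduce to the case that $\I$ is inverse. After rescaling, $d$ has minimum value~$k$ and non-identity morphisms strictly decrease the degree. Since every object of $i \Downarrow \I$ has the form $\sigma \colon i \to j$ with $d(j) < d(i)$, the matching object $M_i Y$ depends only on the restriction of $Y$ to objects of degree strictly less than $d(i)$; in particular at minimum-degree objects $i \Downarrow \I$ is empty and $M_i Y$ is the terminal object of $\C_i$. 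All inductions will therefore run upwards in the degree, exactly as in the direct case.

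The key dual analogues to establish, in order, are the following.
\begin{enumerate}
\item (Dual of Lemma~\ref{PhiLatching} and Corollary~\ref{FinalityCondition}.) For a functor $\Phi \colon \I \to \J$ satisfying an appropriate \emph{initiality} condition (the dual of finality, requiring the relevant comma categories to be non-empty and connected), there is a natural isomorphism $M_i \circ \Phi^* \iso M'_{\Phi(i)}$; as a consequence, $\Phi^*$ preserves $f$-fibrations and good acyclic $f$-fibrations. The projection $P_{i \Downarrow \I}$ satisfies this condition, as in Remark~\ref{Remark}.
\item (Dual of Lemma~\ref{LatchingPreservesCofibrations}.) The matching-object functor $M_i$ sends $f$-fibrations to fibrations in $\C_i$ and good acyclic $f$-fibrations to acyclic fibrations. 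Writing $M_i = \invlim \circ \Psi_* \circ \Phi^*$ as a composition of right adjoints, one exhibits the left adjoint $K_i = L\Phi \circ \Psi^* \circ \delta$ (with $\delta$ the constant-diagram functor and $L\Phi$ a left \textsc{Kan} extension), observes that $K_i$ carries (acyclic) cofibrations to (good acyclic) $f$-cofibrations, and concludes by the adjoint lifting property together with step~(3).
\item (Dual of Lemma~\ref{LiftingProperty}.) $f$-cofibrations have the left lifting property with respect to \emph{good acyclic $f$-fibrations}, that is, maps $f \colon Y \to Z$ for which $Y_i \to \pb Z_i, M_i Z, {M_i Y}$ is an acyclic fibration in $\C_i$ for every $i$. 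The proof is by induction on $d(i)$: at the minimum degree $M_iY = M_iZ = *$ and the condition reduces to the standard lifting in $\C_i$; the inductive step uses the pullback $Z_i \times_{M_i Z} M_i Y$ assembled from the components of the lift already constructed at lower degrees.
\end{enumerate}

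Granted these, axioms {\bf MC1}--{\bf MC3} follow exactly as in Theorem~\ref{ThmCStruc} (using Proposition~\ref{Completeness} for {\bf MC1} and pointwise arguments for {\bf MC2} and {\bf MC3}). Axiom {\bf MC5} is established by the dual inductive factorisation: at each $i$, factor the canonical map $A_i \to X_i \times_{M_i X} M_i T$ in $\C_i$ using data already defined at degrees $<d(i)$; the map $p_i$ is taken to be an $f$-fibration (good and acyclic in one of the two factorisations) and $g_i$ is an $f$-cofibration. Part~(2) of the theorem follows from step~(2) applied componentwise (dual to Corollary~\ref{LevelCofibrations}, by factoring $f_i$ through the pullback $Z_i \times_{M_i Z} M_i Y$); part~(3) follows from a retract argument identical to the one proving Theorem~\ref{ThmCStruc}(2); axiom {\bf MC4} is then immediate from part~(3) together with step~(3) of the list; and part~(4) is immediate from the pointwise computation of limits and colimits via Proposition~\ref{Completeness}.

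The principal obstacle is step~(2): correctly formulating the dual initiality condition and identifying the left adjoint to $M_i$ so that the adjoint lifting argument transfers fibration-preservation from $\C_i$ to $\Tw(\I,\B)$. Once this technical core is in place, the remainder of the proof is a faithful dualisation of the arguments for Theorem~\ref{ThmCStruc}.
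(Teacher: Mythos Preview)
Your plan is the right one and matches the paper, which offers no proof beyond the \qed\ (the theorem is stated as the formal dual of Theorem~\ref{ThmCStruc}). There is, however, one slip in your step~(2) that would cause trouble if carried out as written.

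You propose to exhibit the full left adjoint $K_i = L\Phi \circ \Psi^* \circ \delta$ to $M_i$ and then verify that $K_i$ preserves (acyclic) cofibrations. But the left \textsc{Kan} extension $L\Phi$ requires the categories~$\C_j$ to be \emph{cocomplete}, and the theorem only assumes they are complete model categories (the dual of the cocompleteness hypothesis in Theorem~\ref{ThmCStruc}). Moreover, even granting cocompleteness, checking that $L\Phi \circ \Psi^* \circ \delta$ lands in pointwise (acyclic) cofibrations is not immediate, since $L\Phi$ is built from colimits over the comma categories $\Phi \downarrow j$. The faithful dual of Lemma~\ref{LatchingPreservesCofibrations} does \emph{not} take the adjoint of the whole composite. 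Instead, write $M_i = H_i \circ \Phi^*$ with $H_i = \invlim \circ \Psi_*$; use your step~(1) (initiality of $\Phi = P_{i \Downarrow \I}$) to see that $\Phi^*$ preserves (good acyclic) $f$-fibrations, and then take the left adjoint of $H_i$ alone, namely $\Psi^* \circ \delta$. This adjoint exists with no extra hypotheses, and since its $\sigma$-component is $F_\sigma$ (a left \textsc{Quillen} functor) it visibly sends (acyclic) cofibrations to (acyclic) $f$-cofibrations. The adjoint lifting argument, together with your step~(3), then finishes exactly as in the paper. (A small terminological point: there is no notion of ``good acyclic $f$-cofibration''; the dual special class is the good acyclic $f$-\emph{fibrations}, and the $f$-cofibrations are simply the pointwise cofibrations.)

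With this correction your outline goes through verbatim as the dual of the $c$-structure argument.
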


\begin{remark}
  In fact, it is possible to construct a model structure on the
  category~$\Tw (\I,\B)$ if~$\I$ is a {\author Reedy\/} category and
  $\B$~consists of complete and cocomplete model categories. One has
  to combine the construction of the $c$-structure and the
  $f$-structure. The \weqs{} are pointwise \weqs{}, the fibrations and
  cofibrations are more complicated to define. In the case of diagram
  categories, this is done in section 5.2 of \cite{Ho}, and the proof
  given there applies to our situation as well.
\end{remark}

\subsection {The $g$-Structure}

In this section we consider a cofibrantly generated model
structure with pointwise \weqs{} and pointwise fibrations. (In
particular, the $g$-structure coincides with the $c$-structure
provided both are defined.)  Terminology is taken from~\cite{Ho}.

\begin{definition}
  An $\I$-bundle~$\B$ of cocomplete model categories is called a {\it
    cofibrantly generated adjunction bundle\/} if for all objects $ i
  \in \I $ the model category~$\C_i$ is cofibrantly generated.
\end{definition}

Examples of cofibrantly generated adjunction bundles include the
spectrum bundle~$Sp$ of Example~\ref{Spectra} and the projective space
bundle~$\mathfrak{P}^n$ of~\ref{ProjSpaces}.  The inverse image of a
cofibrantly generated adjunction bundle is cofibrantly generated.

\medbreak

Since~$\C_i$ is cocomplete, the $i$-th evaluation functor $Ev_i
\colon \Tw (\I,\B) \ra \C_i$ has a left adjoint $Fr_i \colon \C_i \ra
\Tw (\I,\B)$, the $i$-th free twisted diagram functor obtained by
twisted left {\author Kan} extension (Theorem~\ref{LeftKan}). Explicitly,
for an object~$A$ of~$\C_i$ the $j$-component of~$Fr_i(A)$ is given by
the coproduct
\[\coprod_{\alpha \in \hom_\I (i, j)} F_\alpha (A)\]
and the structure maps are given in the following way: if $\beta
\colon j \ra k $ is a morphism in~$ \I $, the map
$Fr_i(A)^\sharp_\beta$ is the composition
\begin{eqnarray*}
  F_\beta \bigl(Fr_i(A)_j\bigr) &= & F_\beta \Bigl(\coprod_{\alpha \in
    \hom_\I (i, j)} F_\alpha (A)\Bigr) \\
  &\iso& \coprod_{\alpha \in \hom_\I (i, j)} F_\beta \circ F_\alpha
  (A) \\
  &\iso & \coprod_{\alpha \in \hom_\I (i, j)} F_{\beta
    \circ \alpha} (A) \\
  &\ra& \coprod_{\gamma \in \hom_\I (i, k)} F_\gamma (A)
\end{eqnarray*}
where the last map is the canonical map induced by the identity on
each summand, mapping the $\alpha$-summand of the source into the
$\beta \circ \alpha$-summand of the target. 

Define~$M$ to be the set of maps in $\Tw (\I,\B)$ of the form~$Fr_i
(f)$ with~$i$ some object of~$\I$ and~$f$ a generating cofibration
in~$\C_i$. Define~$N$ to be the set of maps in $\Tw (\I,\B)$ of the
form~$Fr_i (f)$, with~$i$ some object of~$\I$ and~$f$ a generating
acyclic cofibration in~$\C_i$. Note that~$M$ and~$N$ are sets
because~$\I$ is small.

\begin{definition}[The $g$-structure]
  Let $f \colon Y \ra Z$ be a map of twisted diagrams in $\Tw
  (\I,\B)$. We call $f$ a {\it weak equivalence\/} if $f_i$ is a weak
  equivalence in $\C_i$ for every object $i \in \I$. We call $f$ a
  {\it $g$-fibration\/} if $f$ has the right lifting property with
  respect to the set $N$.  We call $f$ a {\it $g$-cofibration\/} if
  $f$ has the left lifting property with respect to every
  $g$-fibration which is also a weak equivalence.
\end{definition}

\begin{lemma}
  \label{RightLiftingProperty}
  A map has the right lifting property with respect to the set $N$
  (\resp~$M$) if and only if all its components are fibrations
  (\resp~acyclic fibrations).
\end{lemma}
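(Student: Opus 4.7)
The plan is to exploit the adjunction $Fr_i \dashv Ev_i$ furnished by Theorem~\ref{LeftKan}. Since $Fr_i$ is the left adjoint of the evaluation functor $Ev_i$, a commutative square in $\Tw(\I,\B)$ with left-hand side $Fr_i(g) \colon Fr_i(A) \to Fr_i(B)$ (for $g \colon A \to B$ in $\C_i$) and right-hand side $f \colon Y \to Z$ corresponds bijectively, by adjointness, to a commutative square in $\C_i$ with left-hand side $g$ and right-hand side $f_i = Ev_i(f)$. Moreover, a diagonal lift on one side corresponds under the adjunction to a diagonal lift on the other. Hence $f$ has the right lifting property with respect to $Fr_i(g)$ if and only if $f_i$ has the right lifting property with respect to $g$ in $\C_i$.

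First I would make this correspondence explicit, essentially by unpacking the unit/counit of the adjunction, and note that it is completely formal. Then to prove the lemma I would let $I_i$ and $J_i$ denote the sets of generating cofibrations and generating acyclic cofibrations of~$\C_i$, so that $M = \bigcup_{i \in \I} Fr_i(I_i)$ and $N = \bigcup_{i \in \I} Fr_i(J_i)$. By the adjunction argument just sketched, $f$ has the right lifting property with respect to $N$ if and only if, for every object $i \in \I$, $f_i$ has the right lifting property with respect to every element of $J_i$. Since $\C_i$ is cofibrantly generated, the latter condition is equivalent to $f_i$ being a fibration in $\C_i$.

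The same argument, using $M$ and $I_i$ in place of $N$ and $J_i$, shows that $f$ has the right lifting property with respect to $M$ if and only if each $f_i$ is an acyclic fibration in $\C_i$. The only step that requires a little care is verifying that the adjunction bijection really does send lifting data to lifting data, but this is standard: adjointness identifies the solid and dotted arrows of the two squares, and commutativity of the triangles on each side translates across the adjunction. No honest obstacle arises, as the claim is essentially a formal consequence of the adjunction $Fr_i \dashv Ev_i$ combined with the assumption that every $\C_i$ is cofibrantly generated.
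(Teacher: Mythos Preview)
Your proposal is correct and follows exactly the approach of the paper: the paper's proof is the single sentence ``This follows from the adjointness of $Fr_i$ and $Ev_i$, and the fact that $\B$ is cofibrantly generated,'' and your write-up is simply an expanded version of that sentence.
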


\begin{proof}
  This follows from the adjointness of $Fr_i$ and $Ev_i$, and the fact
  that $\B$ is cofibrantly generated.
\end{proof}

\begin{lemma}
  \label{SmallObjectArgument}
  The domains of the maps of $M$ are small relative to $M$-cell. The
  domains of the maps of $N$ are small relative to $N$-cell.
\end{lemma}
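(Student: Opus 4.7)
The plan is to combine the adjunction $Fr_i \dashv Ev_i$ with the pointwise nature of colimits in $\Tw(\I,\B)$ (Proposition~\ref{Completeness}) to reduce the claim to a smallness statement inside the cofibrantly generated categories~$\C_i$.

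A typical domain of a map in~$M$ is $Fr_i(A)$, where~$A$ is the domain of some generating cofibration of~$\C_i$. Since~$\C_i$ is cofibrantly generated, $A$ is $\kappa$-small relative to the cofibrations of~$\C_i$ for some cardinal~$\kappa$ (a standard fact about cofibrantly generated model categories). The adjunction supplies the natural isomorphism
\[\hom_{\Tw(\I,\B)}\bigl(Fr_i(A),\, X\bigr) \iso \hom_{\C_i}\bigl(A,\, Ev_i(X)\bigr) \ ,\]
so the claim will follow once I verify that (a)~$Ev_i$ preserves the transfinite colimits at issue, and (b)~$Ev_i$ sends $M$-cell into the cofibrations of~$\C_i$. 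Part~(a) is immediate from Proposition~\ref{Completeness}, since colimits in~$\Tw(\I,\B)$ are computed pointwise.

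For~(b), a map in $M$-cell is built by transfinite composition from pushouts of coproducts of maps $Fr_j(g)$, with~$g$ a generating cofibration of~$\C_j$. Since $Ev_i$ preserves all colimits, its image is built in the same way from the maps
\[Ev_i\bigl(Fr_j(g)\bigr) = \coprod_{\alpha \in \hom_\I (j,i)} F_\alpha (g) \ ,\]
and each $F_\alpha(g)$ is a cofibration in~$\C_i$ because $F_\alpha$ is left \textsc{Quillen}. As cofibrations of~$\C_i$ are stable under coproducts, pushouts, and transfinite composition, the image of any $M$-cell map is again a cofibration of~$\C_i$, and~$A$ is small relative to such. The argument for $N$-cell is formally identical, with ``generating acyclic cofibration'' and ``acyclic cofibration'' replacing ``generating cofibration'' and ``cofibration''; the key input is that left \textsc{Quillen} functors also preserve acyclic cofibrations. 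The only real obstacle is step~(b), and that reduces to routine bookkeeping once the setup is in place.
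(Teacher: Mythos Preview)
Your argument is correct and follows essentially the same route as the paper: reduce via the adjunction $Fr_i \dashv Ev_i$ and pointwise colimits (Proposition~\ref{Completeness}) to smallness of the domain in~$\C_i$ relative to cofibrations, then check that $M$-cell maps are pointwise cofibrations. The only cosmetic difference is that you compute $Ev_i\bigl(Fr_j(g)\bigr)$ explicitly as a coproduct of $F_\alpha(g)$'s, whereas the paper simply asserts that maps in~$M$ are pointwise cofibrations and invokes closure under cobase change and transfinite composition; both justifications are fine.
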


\begin{proof}
  This follows from the adjointness of $Fr_i$ and $Ev_i$, and the fact
  that $\B$ is cofibrantly generated. We give a detailed argument for
  the case of $M$.  Let $A$ be the domain of a map in $M$, so $A$ is
  of the form $Fr_i(X)$ for some $i \in \I$, with $X$ being the domain
  of a generating cofibration in $\C_i$.  Denote the set of generating
  cofibrations in $\C_i$ by $J$ and recall that $X$ is $\kappa$-small
  relative to the class $J$-cell for some cardinal $\kappa$, because
  $\C_i$ is cofibrantly generated. We will prove that $A = Fr_i(X)$ is
  $\kappa$-small relative to the class $M$-cell.

  Let $\lambda$ be a $\kappa$-filtered ordinal and $B \colon \lambda
  \ra \Tw (\I,\B)$ be a functor such that the map $B_\beta \ra
  B_{\beta +1}$ is in $M$-cell for all $\beta$ with $\beta+1 <
  \lambda$.  We have to prove that the canonical map
  \[\dirlim \Tw (\I,\B)(A,B_\beta) \ra \Tw (\I,\B)(A, \dirlim B)\]
  is an isomorphism. The adjointness of~$Fr_i$ and~$Ev_i$ provides
  that this map is isomorphic to the composite
  \[\dirlim \C_i(X,Ev_i(B_\beta)) \ra \C_i (X, Ev_i \circ \dirlim B)
  \iso \C_i(X, \dirlim Ev_i\circ B)\]
  (where the isomorphism is the one from
  Proposition~\ref{Completeness}). This composite is the canonical
  map, and $X$ is $\kappa$-small relative to $J$-cell. By
  \cite[2.1.16]{Ho}, $X$ is then even $\kappa$-small relative to the
  class of cofibrations in $\C_i$. Hence we are done if for
  all~$\beta$ with \hbox {$ \beta +1 < \lambda $} the map \hbox {$
    Ev_i(b) \colon Ev_i (B_\beta) \ra Ev_i(B_{\beta +1})$} is a
  cofibration. However, since the maps in $M$ are in particular
  pointwise cofibrations, and the class of pointwise cofibrations is
  closed under cobase changes and transfinite compositions, every map
  in $M$-cell is a pointwise cofibration. This finishes the proof.
\end{proof}

\begin{theorem}
  Let~$\B$ be a cofibrantly generated bundle over~$\I$.  The
  $g$-structure is a model structure on $\Tw (\I,\B)$ which is
  cofibrantly generated by the sets~$M$ and~$N$.
\end{theorem}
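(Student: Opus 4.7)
The plan is to invoke the standard recognition principle for cofibrantly generated model structures (in the form of \cite[2.1.19]{Ho}, or the equivalent statement in terms of generators and small-object-argument data). This reduces the theorem to verifying six clauses: (i) completeness and cocompleteness of $\Tw(\I,\B)$; (ii) 2-of-3 and retract closure for the class $W$ of weak equivalences; (iii) the smallness of the domains of $M$ and $N$ relative to $M$-cell and $N$-cell, respectively; (iv) $N$-cell $\subseteq W \cap M$-cof; and (v) $M$-inj $\subseteq W \cap N$-inj.

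Four of these clauses are already in hand. Clause~(i) is Proposition~\ref{Completeness}. Clause~(ii) is immediate: since $W$ is defined pointwise and each $\C_i$ is a model category, the 2-of-3 and retract properties are inherited coordinatewise. Clause~(iii) is precisely Lemma~\ref{SmallObjectArgument}. Clause~(v) follows from Lemma~\ref{RightLiftingProperty}, which identifies $M$-inj with the class of pointwise acyclic fibrations; such maps are pointwise weak equivalences (hence in $W$) and pointwise fibrations (hence in $N$-inj by the same lemma).

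The principal work is clause~(iv), and I would split it in two stages. First, I would check that each generator $Fr_i(f) \in N$ itself lies in $W \cap M$-cof. The $j$-component of $Fr_i(f)$ is $\coprod_{\alpha \in \hom_\I(i,j)} F_\alpha(f)$; since every $F_\alpha$ is a left \textsc{Quillen} functor by hypothesis on the bundle, each $F_\alpha(f)$ is an acyclic cofibration in $\C_j$, and coproducts of acyclic cofibrations in a model category are acyclic cofibrations. Hence $Fr_i(f)$ is a pointwise acyclic cofibration, in particular in $W$. That it lies in $M$-cof follows by adjointness: the $Fr_i \dashv Ev_i$ adjunction together with Lemma~\ref{RightLiftingProperty} reduces a lift of $Fr_i(f)$ against $p \in M$-inj to a lift of the acyclic cofibration $f$ against the acyclic fibration $Ev_i(p)$ in $\C_i$, which exists by axiom~\textbf{MC4}.

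Second, I would verify that $W \cap M$-cof is closed under the operations used to build $N$-cell, namely pushouts and transfinite compositions. The $M$-cof half is standard (classes defined by a left lifting property are closed under such operations). For the $W$ half, Proposition~\ref{Completeness} ensures these colimits are computed pointwise, so the claim reduces to the fact that in each cofibrantly generated $\C_i$ the class of trivial cofibrations is closed under pushout and transfinite composition. I expect this last pointwise closure to be the main obstacle in that it is the only step not already spelled out in the excerpt; however it is a well-known property of cofibrantly generated model categories (acyclic cofibrations being retracts of transfinite compositions of pushouts of generating acyclic cofibrations). Combining everything, clause~(iv) holds, the recognition theorem applies, and the $g$-structure is the desired cofibrantly generated model structure with generating cofibrations $M$ and generating acyclic cofibrations $N$.
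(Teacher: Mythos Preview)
Your approach matches the paper's: invoke \cite[2.1.19]{Ho}, handle the routine conditions via Lemma~\ref{RightLiftingProperty} and Lemma~\ref{SmallObjectArgument}, then argue that $N$\nobreakdash-cell consists of pointwise acyclic cofibrations. There is one omission, however: you announce six clauses but list only five, and the missing one is precisely Hovey's condition~6, the reverse inclusion $W \cap N\text{-inj} \subseteq M\text{-inj}$ (or the dual statement on cofibrations). Without it the recognition theorem does not apply. It is immediate from Lemma~\ref{RightLiftingProperty}---a pointwise weak equivalence which is a pointwise fibration is a pointwise acyclic fibration---and the paper records exactly this by saying that Lemma~\ref{RightLiftingProperty} handles conditions~5 and~6 together.

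Your treatment of clause~(iv) is also slightly roundabout compared to the paper. Arguing via closure of $W \cap M\text{-cof}$ under pushouts and transfinite composition tacitly uses that $M\text{-cof}$ maps are pointwise cofibrations (so that $W \cap M\text{-cof}$ consists of pointwise acyclic cofibrations), a fact you do not state. The paper avoids this by working directly with the class of pointwise acyclic cofibrations: maps in~$N$ lie in it, it is closed under the relevant colimits by Proposition~\ref{Completeness} and the corresponding closure in each~$\C_i$, hence $N\text{-cell} \subseteq W$; the inclusion $N\text{-cell} \subseteq M\text{-cof}$ is obtained separately from $M\text{-inj} \subseteq N\text{-inj}$, again via Lemma~\ref{RightLiftingProperty}.
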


\begin{proof}
  We use Theorem~2.1.19 of~\cite{Ho}, which applies also for model
  categories in the sense of~\cite{DS}. The weak equivalences clearly
  define a subcategory which is closed under retracts and satisfies
  {\bf MC2}, so condition~1 holds.  Lemma~\ref{SmallObjectArgument}
  implies conditions~2 and~3, and Lemma~\ref{RightLiftingProperty}
  implies conditions~5 and~6, and one half of condition~4. It remains
  to prove that every map in $N$-cell is a weak equivalence.  Since
  every map in~$N$ is pointwise an acyclic cofibration, and the class
  of pointwise acyclic cofibrations is closed under pushouts and
  transfinite compositions, every map in $N$-cell is pointwise an
  acyclic cofibration, so in particular a weak equivalence.
\end{proof}

\begin{remark}
  From the general theory of cofibrantly generated model structures,
  we know that {\it a morphism~$f$ of twisted diagrams is a
    $g$-cofibration if and only if it is a retract of a transfinite
    composition of cobase changes of maps in~$M$\/}. Similarly,
  acyclic $g$-cofibrations can be characterised using the set~$N$.
\end{remark}

\section{Sheaves and Homotopy Sheaves}
\label{sec:sheaves}

Let~$\C$ be a model category, and suppose the diagram category $\Fun
(\I, \C)$ carries a model structure with pointwise \weqs{} as
described in one of the previous sections. There is a canonical
functor
\[h \colon \Fun (\I,\ \C) \ra \Fun (\I,\ \Ho \C)\]
which replaces each structure map of a diagram by its homotopy class
(or, more precisely, by its image under the localisation functor $ \C
\ra \Ho \C$).  In particular, the structure maps of a diagram~$Y$ are
\weqs{} if and only if the structure maps of~$h(Y)$ are isomorphisms.

The functor~$h$ factors through a functor
\[\hbar \colon \Ho \Fun (\I,\ \C) \ra \Fun (\I,\ \Ho \C)\]
which is, in general, {\it not\/} an equivalence of categories.

In this section, we construct such functors~$h$ and~$\hbar$ for
twisted diagrams. Unfortunately, this is not as straightforward as one
could expect since formation of total derived functors is not
functorial. One way to explain this is the following: if the
functor~$U$ preserves \weqs{} between fibrant objects, its total right
derived~$\RR U$ exists, and~$\RR U (Y)$ is given by evaluating~$U$ on
a fibrant replacement of~$Y$. Thus for $ U = \id_{\C} $ we see that
if~$\C$ contains objects which are not fibrant, {\it the total right
  derived of the identity functor~$\id_{\C}$ is isomorphic to, but
  different from, the functor~$\id_{\Ho \C}$.}

We remedy this by focusing on the full subcategory~$\C^f$ of fibrant
objects in~$\C$.  This is possible since by a theorem of
\textsc{Quillen\/}, the localisation of~$\C^f$ with respect to \weqs{}
is equivalent to the homotopy category of~$\C$.

\subsection{Associated Homotopy Bundle}

\label{AssocSec}

Let~$\C$ denote a model category, and denote by~$\C^f$ the full
subcategory of fibrant objects. The homotopy category~$\Ho \C$ is the
localisation of~$\C$ with respect to the class of \weqs{}. We will use
the rather explicit model described in~\cite{DS}: the objects of~$\Ho \C$
are the objects of~$\C$, and morphisms are homotopy classes of maps
between cofibrant-fibrant replacements.  Let~$\Hof \C$ denote the full
subcategory of~$\Ho \C$ generated by the fibrant objects.

\begin{lemma}
  \label{LocFib}
  The following diagram commutes:
  \begin{diagram}
    \C^f & \ra & \C \\ \da<{\gamma_f} && \da>{\gamma} \\ \Hof \C & \ra & \Ho \C \\
  \end{diagram}
  The vertical arrows are localisations with respect to the class of
  \weqs{}, the horizontal arrows are full embeddings. The lower
  horizontal arrow is an equivalence of categories.
\end{lemma}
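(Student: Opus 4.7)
The plan is to verify the three separate assertions of the lemma using the explicit Dwyer--Spalinski description of $\Ho\C$ recalled just before the statement.

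For the commutativity of the square, I would unravel the definitions directly. In the Dwyer--Spalinski model, $\gamma\colon\C\to\Ho\C$ is the identity on objects and sends a morphism $f\colon X\to Y$ to the homotopy class of any lift of $f$ between chosen cofibrant--fibrant replacements. Since $\Hof\C$ is defined as the full subcategory of $\Ho\C$ spanned by the fibrant objects, the lower horizontal arrow is the tautological inclusion and $\gamma_f$ is obtained from $\gamma$ simply by restricting its domain to $\C^f$. Hence the two compositions $\C^f\to\C\to\Ho\C$ and $\C^f\to\Hof\C\to\Ho\C$ agree object--wise and morphism--wise on the nose.

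For the claim that $\Hof\C\hookrightarrow\Ho\C$ is an equivalence of categories, fullness and faithfulness are built into the definition of $\Hof\C$ as a full subcategory, so only essential surjectivity requires argument. Given an object $X\in\Ho\C$, regarded as an object of $\C$, axiom \textbf{MC5} yields a factorisation $X\to X^f\to *$ of the map to the terminal object into a weak equivalence $X\to X^f$ followed by a fibration; thus $X^f$ is fibrant. Since $X\to X^f$ is a weak equivalence, its image under $\gamma$ is an isomorphism in $\Ho\C$, so $X\cong X^f$ with $X^f\in\Hof\C$. This exhibits essential surjectivity.

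The ``main obstacle'' is really just a bookkeeping point rather than a substantial difficulty: one has to check that the restriction of $\gamma$ to $\C^f$ actually agrees with the localisation functor $\gamma_f$ of $\C^f$ at its own class of weak equivalences, \emph{i.e.}, that $\Hof\C$ so described is genuinely a localisation of $\C^f$. This follows from \textsc{Quillen}'s theorem (explicitly referenced in the text just before the lemma): between fibrant objects, the Dwyer--Spalinski hom-sets of $\Ho\C$ depend only on the model structure restricted to $\C^f$, so $\Hof\C$ has the universal property of $\C^f[\mathrm{weqs}^{-1}]$. Once this identification is in place, the commutativity of the square and the equivalence statement are both immediate.
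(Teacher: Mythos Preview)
Your argument is correct and follows the same approach as the paper, which simply cites \textsc{Quillen}'s theorem on the existence of homotopy categories and omits the details. You have spelled out precisely those omitted details: commutativity is tautological from the definition of $\Hof\C$ as a full subcategory, the equivalence reduces to essential surjectivity via fibrant replacement, and the identification of the restricted functor with the localisation of~$\C^f$ is the content of \textsc{Quillen}'s theorem.
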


\begin{proof}
  This follows from \textsc{Quillen}'s theorem on existence of
  homotopy categories \cite[\S I.1, Theorem~1]{Q2}. We omit the details.
\end{proof}



Now choose, for each object~$X \in \C$, a cofibrant replacement
\[p_X \colon X^c \rFib^{\sim} X \ .\]
If~$X$ is cofibrant itself, we choose $p_X = \id_X$. Similarly, we
choose fibrant replacements
\[q_X \colon X \rCof^{\sim} X^f\]
with $q_X = \id_X$ for fibrant~$X$.---The following Proposition is a
standard exercise in model category theory:

\begin{proposition}
  \label{Omnibus}
  Suppose $ U \colon \C \ra \D $ is right {\author Quillen\/} with left adjoint~$F$.
  \begin{enumerate}
  \item The total right derived $\RR U \colon \Ho \C \ra \Ho \D $
    exists and is given by $ \RR U(X) := U (X^f) $ on objects.
    Moreover, the functor~$\RR U$ has a left adjoint~$\LL F$.
  \item The image of the functor~$\RR U$ lies inside~$\Hof \D$,
    hence~$\RR U$ induces (by restriction) a functor $ \RR_f U \colon
    \Hof \C \ra \Hof \D $.
  \item Every map $ \alpha \colon X \ra Y $ in~$\Hof \C$ is
    represented by a diagram of the form $X \lFib^{\sim} X^c \lra^{f}
    Y$ in~$\C$.
  \item The functor~$\RR_f U$ is given by the identity on objects
    and, using the description of~(3), by $\RR_f U(\alpha) =
    \gamma_f (U (f)) \circ (\gamma_f (U(p_X)))^{-1}$ on morphisms.
  \item We have $\RR_f U \circ \gamma_f^{\C} = \gamma_f^{\D} \circ U$.
    Moreover, the functor~$\RR_f U$ is a left {\author Kan\/}
    extension of~$U$ along~$\gamma_f$.
  \item The equalities $\RR_f \id_{\C^f} = \id_{\Hof \C}$ and $\RR_f
    (V \circ U) = \RR_f V \circ \RR_f U$ hold.
  \item The functor~$\RR_f U$ has a left adjoint, denoted~$\LL_f F$,
    given (on objects) by the formula $\LL_f F(X) := F (X^c)^f$. \qed
  \end{enumerate}
\end{proposition}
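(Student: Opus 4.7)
The plan is to proceed part by part, with the main leverage coming from \textsc{Ken Brown}'s lemma (a right \textsc{Quillen\/} functor preserves \weqs{} between fibrant objects) and the explicit model of $\Ho\C$ of~\cite{DS}.

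For~(1), I would invoke \textsc{Quillen\/}'s theorem on derived functors directly: since $U$ preserves \weqs{} between fibrant objects, the formula $X \mapsto U(X^f)$, together with the standard cylinder/path object calculus, defines $\RR U$ and is right adjoint to the corresponding $\LL F$ on $\Ho\C$. Part~(2) is immediate because the right \textsc{Quillen\/} functor $U$ sends the fibrant object $X^f$ to a fibrant object of $\D$. Part~(3) follows from the explicit model of the homotopy category: when target and source are fibrant, the required cofibrant-fibrant replacement of the source is just $X^c$, and no further fibrant replacement of~$Y$ is needed, so one can represent a morphism by $X \lFib^{\sim} X^c \lra^f Y$, composed with the formal inverse of~$p_X$.

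For~(4), the formula is forced: using the representation from~(3), the map $U(p_X)$ is a \weq{} between fibrant objects by \textsc{Ken Brown}, so $\gamma_f(U(p_X))$ is invertible in $\Hof\D$, and $\RR_f U(\alpha) = \gamma_f(U(f)) \circ (\gamma_f(U(p_X)))^{-1}$ is the only expression compatible with part~(1). Part~(5) then splits in two: the identity $\RR_f U \circ \gamma_f^{\C} = \gamma_f^{\D} \circ U$ is tautological on objects (since $X^f = X$ by our choice of replacements for fibrant $X$), and on morphisms between fibrant objects it reduces to the observation that we have chosen $p_X = \id$ for cofibrant~$X$; the left \textsc{Kan\/} extension claim I would deduce from Lemma~\ref{LocFib} and the universal property of the localisation $\gamma_f$, since any functor out of $\C^f$ inverting the \weqs{} factors uniquely through $\gamma_f$.

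Part~(6) is where the choices above start earning their keep, and is the step I expect to demand the most care: for $U = \id_{\C^f}$ one gets $\RR_f\id(X) = X^f = X$ strictly, and on morphisms the formula from~(4) reproduces~$\alpha$ because $p_X = \id$ when $X$ is cofibrant; for a composite of right \textsc{Quillen\/} functors $V \circ U$, one checks $\RR_f(V \circ U)(X) = V(U(X^f)) = V((U(X^f))^f)= \RR_f V \circ \RR_f U (X)$, using crucially that $U(X^f)$ is already fibrant so that $q_{U(X^f)} = \id$. The analogous verification on morphisms uses the representation of~(3) together with the fact that $U$ sends $p_X$ to a \weq{} between fibrant objects, whose image under $\gamma_f$ is inverted by construction; the strictness (as opposed to mere natural isomorphism) is precisely what the fibrant restriction buys us, and this is what repairs the non-functoriality of total derived functors alluded to in the discussion preceding the proposition. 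Finally, for~(7) the dual construction applied to $F$ yields $\LL_f F(X) = F(X^c)^f$, and the adjunction $\LL_f F \dashv \RR_f U$ is obtained by restricting the adjunction of~(1) along the equivalence $\Hof\C \simeq \Ho\C$ of Lemma~\ref{LocFib}.
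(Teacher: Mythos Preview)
Your proposal is correct and in fact supplies more than the paper does: the proposition is introduced as ``a standard exercise in model category theory'' and carries its \qed\ inside the statement, with no proof given. One small slip: in part~(5), the equality $\RR_f U \circ \gamma_f^{\C} = \gamma_f^{\D} \circ U$ on morphisms does not rest on the convention $p_X = \id_X$ for cofibrant~$X$ (the source~$X$ here is fibrant, not necessarily cofibrant); rather, represent $\gamma_f(f)$ via $X \lFib^{\sim}_{p_X} X^c \lra^{f \circ p_X} Y$ and apply your formula from~(4) to get $\gamma_f(U(f \circ p_X)) \circ \gamma_f(U(p_X))^{-1} = \gamma_f(U(f))$ directly.
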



In view of the previous lemma, parts~(2) and~(5) mean that $\RR_f
U$~is a ``good'' substitute for~$\RR U$. Moreover, by parts~(6)
and~(7), the following definition makes sense:

\begin{definition}[Associated homotopy bundle]
  If $\B = (\C,\ F,\ U)$ is an $\I$-indexed adjunction bundle of model
  categories, we define its {\it associated homotopy bundle of fibrant
    objects\/} $\Hof \B = (\Hof \C,\ \LL_f F,\ \RR_f U)$ as the
  $\I$-indexed adjunction bundle given by $i \mapsto \Hof \C_i$ for
  objects $i \in \I $ and $ \sigma \mapsto \LL_f F_\sigma$ and $\sigma
  \mapsto \RR_f U_\sigma$ for morphisms $ \sigma \in \I $.
\end{definition}

\subsection{Construction of~$h$ and $\hbar$}
\label{subsec:construction_of_h}

Suppose $\B = (\C,\ F,\ U)$ is an $\I$-indexed adjunction bundle of
model categories. We assume that we can equip~$\Tw (\I, \B)$ with a
model structure with pointwise \weqs{} (this is certainly possible if
$\I$~is locally direct or locally inverse, or if $\B$~is a cofibrantly
generated bundle). We want to associate to each twisted diagram $Y \in
\Tw (\I, \B)$ a corresponding twisted diagram $h(Y) \in \Tw (\I, \Hof
\B)$.

Assume for the moment that~$Y$ is a twisted diagram with fibrant
components.  Let~$Z$ denote the following twisted diagram:
\begin{gather*}
  i \mapsto \gamma_f (Y_i) = Y_i\\
  \sigma \mapsto \gamma_f (y_\sigma^\flat) \colon Y_i \ra U_\sigma
  (Y_j) = \RR_f U_\sigma (\gamma_f (Y_j))
\end{gather*}
We need to check the commutativity condition: if $i \lra^\sigma j
\lra^\tau k$ are composable morphisms in~$\I$, the following diagram
is supposed to commute:
\begin{diagram}
  \gamma_f (Y_i) & \lra^{\gamma_f (y_\sigma^\flat)} & \RR_f U_\sigma
  (\gamma_f (Y_j)) \\
  \dTo<{\gamma_f (y_{\tau\sigma}^\flat)} && \dTo>{\RR_f U_\sigma
    (\gamma_f (y_\tau^\flat))} \\ 
  \RR_f U_{\tau\sigma} (\gamma_f (Y_k)) & \rEqual & \RR_f U_\sigma
  \circ \RR_f U_\tau (\gamma_f (Y_k)) \\
\end{diagram}
Using~\ref{Omnibus}~(5) we see that this is just the corresponding diagram
for~$Y$ after application of~$\gamma_f$, hence commutes as desired.

A morphism $f \colon Y \ra \bar Y$ between pointwise fibrant twisted
diagrams induces a map $ g \colon Z \ra \bar Z $ with components $g_i
= \gamma_f (f_i)$ as can be shown using~\ref{Omnibus}~(5) and
functoriality of~$\gamma_f$.

\medbreak

Now we use this construction to define the actual functor~$h$
(or~$\hbar$). We discuss three cases in order of increasing
difficulty.

\medbreak

{\bf Case~1:} All the model categories~$\C_i$ used in the bundle~$\B$
consist of fibrant objects only.  Then $\C_i^f = \C_i$ and $\RR_f
U_\sigma = \RR U_\sigma$.  The assignment $Y \mapsto Z$ defines (the
object function of) a functor~$h$. By construction it maps \weqs{} to
isomorphisms, hence descends to a functor $\hbar \colon \Ho \Tw (\I,
\B) \ra \Tw (\I, \Hof \B)$.

\medbreak

{\bf Case~2:} Suppose that fibrant objects of~$\Tw (\I, \B)$ are
pointwise fibrant.  Suppose moreover that~$\Tw (\I, \B)$ has a fibrant
replacement functor $Y \mapsto Y^f$.  Then we can apply the above
construction to~$Y^f$ instead of~$Y$, and the composite $Y \mapsto Y^f
\mapsto Z$ defines (the object function of) a functor~$h$.  By
construction it maps \weqs{} to isomorphisms, hence descends to a
functor $\hbar \colon \Ho \Tw (\I, \B) \ra \Tw (\I, \Hof \B)$.

\medbreak

{\bf Case~3:} Suppose that fibrant objects of~$\Tw (\I, \B)$ are
pointwise fibrant.  Let~$\mathfrak{K}$ denote the category with
objects the fibrant and cofibrant twisted diagrams in~$\Tw (\I, \B)$,
and morphisms the homotopy classes of maps between such
objects. By~\cite[5.6]{DS} the inclusion $\nu \colon \mathfrak{K} \ra
\Ho \Tw (\I, \B)$ is an equivalence of categories.  Thus it suffices
to construct a functor $\phi \colon \mathfrak{K} \ra \Tw (\I, \Hof
\B)$; then we can define~$\hbar$ by the composition of an inverse
of~$\nu$ with~$\phi$.

An object $Y \in \mathfrak{K}$ is in particular a pointwise fibrant
twisted diagram. Hence the construction preceding case~1 applies, and
we can define $\phi (Y):= Z$.

A morphism $f \colon Y \ra \bar Y$ in~$\mathfrak{K}$ can be
represented by a map $\bar f \colon Y \ra \bar Y$ in~$\Tw (\I, \B)$
by~\cite[5.7]{DS}, and~$\bar f$ induces $\phi (f) \colon
\phi (Y) \ra \phi (\bar Y)$ with components $\phi (f)_i = \gamma_f
\bar f_i$.  To show that~$\phi (f)$ does not depend on the choice
of~$\bar f$, recall that homotopy is an equivalence relation for maps
$Y \ra \bar Y$ by~\cite[4.22]{DS}. Moreover, the evaluation
functors~$Ev_i$ (given by $Y \rMapsto Y_i$) commute with products and
preserve \weqs{}. Hence they preserve path objects and right
homotopies. Thus if~$\bar f$ and~$\bar g$ are homotopic, so are~$\bar
f_i$ and~$\bar g_i$. Since the localisation functor identifies
homotopic maps, this proves that~$\phi (f)$ is well defined.

Since homotopy is compatible with composition \cite[4.11 and
4.19]{DS}, and since the identity morphisms in~$\mathfrak{K}$ are
represented by identity maps, $\phi$~is a functor as required.

\subsection {Comparison of Sheaves and Homotopy Sheaves}

\begin{definition}[Left strict sheaves]
  Given an $\I$-indexed adjunction bundle~$\B$, we call an object $Y
  \in \Tw (\I, \B)$ a {\it left strict sheaf\/} if the $\sharp$-type
  structure map $y_\sigma^\sharp \colon F_\sigma (Y_i) \ra Y_j$ is an
  isomorphism for all morphisms $\sigma \colon i \ra j$ of~$ \I $.  We
  write~$\shv (\I, \B)$ for the full subcategory of~$\Tw (\I, \B)$
  generated by left strict sheaves.
\end{definition}

There is also a dual notion of a {\it right strict sheaf\/} requiring
that all $\flat$-type structure maps are isomorphisms.

\begin{example}[Quasi-coherent sheaves on toric varieties]
  \label{example:quasi_coh_toric}
  Recall the adjunction bundle $\Sigma^\op \bowtie_{\Rng} \Mod$
  associated to a toric variety~$X$ with fan~$\Sigma$,
  cf.~\ref{Toric}.  We claim that the category $\shv (\Sigma^\op,
  \Sigma^\op \bowtie_{\Rng} \Mod)$ is equivalent to the category of
  quasi-coherent sheaves on~$X$.  To see this, recall that a
  cone~$\sigma \in \Sigma$ corresponds to an open affine
  sub-scheme~$U_\sigma$ of~$X$. Given a quasi-coherent
  sheaf~$\mathcal{F}$, the associated twisted diagram is given by
  $\sigma \mapsto \mathcal{F} (U_\sigma)$ with $\flat$-type structure
  maps given by restriction maps.  Conversely, a left strict sheaf~$Y$
  defines quasi-coherent sheaves~$\tilde Y_\sigma$ on the
  sub-schemes~$U_\sigma$ which can be glued via the $\sharp$-type
  structure maps to give a quasi-coherent sheaf on~$X$. The details
  are left to the reader.
\end{example}

\begin{definition}[Left homotopy sheaves]
  \label{def:homotopy_sheaves}
  Suppose that~$\B$ is an adjunction bundle of model categories.  We
  call an object $Y \in \Tw (\I, \B) $ a {\it left homotopy sheaf\/}
  if for all morphisms $ \sigma \colon i \ra j $ of~$ \I $ there is an
  acyclic fibration $ \bar Y_i \rFib^\sim Y_i $ in~$\C_i$ with~$\bar
  Y_i$ cofibrant such that the adjoint to the composite
  \[\bar Y_i \rFib^\sim Y_i \lra^{y_\sigma^\flat} U_\sigma (Y_j)\]
  is a \weq{} in~$\C_j$. We write $\hshv (\I, \B)$ for the full
  subcategory of~$\Tw (\I, \B)$ generated by left homotopy sheaves.
\end{definition}

\begin{theorem}[Comparison of strict sheaves and homotopy sheaves]
  \label{HomInv}
  Let~$\B$ denote an $\I$-indexed adjunction bundles of model
  categories.  Assume that we have a map~$\hbar$ as given by one of
  the cases of~\S\ref{subsec:construction_of_h}.  An object $ Y \in
  \Tw (\I, \B) $ is a left homotopy sheaf if and only if $ \hbar (Y)
  \in \Tw (\I, \Hof \B) $ is a left strict sheaf. In particular, if $
  Y \lra^\sim Z $ is a \weq{} of twisted diagrams, $Y$~is a left
  homotopy sheaf if and only if~$Z$ is.
\end{theorem}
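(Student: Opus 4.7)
The plan is to show that both sides of the theorem are equivalent to the single condition that, for every morphism $\sigma \colon i \to j$ in $\I$, the adjoint map $F_\sigma(\bar Y_i) \to Y_j$ of $\bar Y_i \to Y_i \to U_\sigma(Y_j)$ is a weak equivalence in $\C_j$, where $\bar Y_i$ denotes any cofibrant replacement of $Y_i$ in $\C_i$. That this condition is independent of the choice of $\bar Y_i$ follows from Ken Brown's lemma applied to the left Quillen functor $F_\sigma$. Once the main equivalence is established, the ``in particular'' drops out formally, since a pointwise weak equivalence $Y \to Z$ is sent by $\hbar$ to an isomorphism in $\Tw(\I, \Hof\B)$ and the left strict sheaf property is invariant under isomorphism.

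For the homotopy sheaf side, the equivalence with the stated condition is essentially a reformulation of Definition~\ref{def:homotopy_sheaves}. For the strict sheaf side, one unwinds $\hbar(Y)$ according to the three cases of \S\ref{subsec:construction_of_h}: in each case $\hbar(Y)$ has components $Y'_i$ for some pointwise-fibrant twisted diagram $Y'$ weakly equivalent to $Y$ in $\Tw(\I,\B)$ (the identity in case~1, a fibrant replacement in case~2, and a cofibrant-fibrant replacement in case~3, which is furthermore pointwise cofibrant by Corollary~\ref{LevelCofibrations}), and its $\flat$-maps are $\gamma_f((y')^\flat_\sigma)$. By Proposition~\ref{Omnibus}(7) the corresponding $\sharp$-map in $\Tw(\I, \Hof\B)$ is the derived-adjoint $\LL_f F_\sigma(Y'_i) = F_\sigma((Y'_i)^c)^f \to Y'_j$ in $\Hof\C_j$. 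Since $F_\sigma((Y'_i)^c)$ is cofibrant, $Y'_j$ is fibrant, and $F_\sigma((Y'_i)^c) \to F_\sigma((Y'_i)^c)^f$ is a weak equivalence, this derived-adjoint is an isomorphism in $\Hof\C_j$ if and only if $F_\sigma((Y'_i)^c) \to Y'_j$ is a weak equivalence in $\C_j$. A further application of Ken Brown's lemma compares the cofibrant objects $(Y'_i)^c$ and $\bar Y_i$, each a cofibrant replacement of the weakly equivalent objects $Y'_i$ and $Y_i$; combined with two-of-three against the weak equivalence between $Y_j$ and $Y'_j$, this matches the condition with ``$F_\sigma(\bar Y_i) \to Y_j$ is a weak equivalence'', completing the equivalence. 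A small compatibility check (using the lifting axiom to ensure that the two natural candidate maps $\bar Y_i \to Y'_i$ agree) is needed to confirm that the two composite adjoint maps really coincide.

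The main technical obstacle is the explicit identification of the $\sharp$-map of $\hbar(Y)$ in $\Hof\C_j$ with the class of the derived-adjoint of the corresponding $\flat$-map. This requires careful bookkeeping with the units and counits of the derived adjunction $\LL_f F_\sigma \dashv \RR_f U_\sigma$, using Proposition~\ref{Omnibus} parts~(5) and~(7) to relate the derived functors to the underived ones on cofibrant and fibrant objects; in particular one must verify that the process of ``adjoint then localise'' agrees with ``localise then derived-adjoint''. Once this identification is in hand, the remainder reduces to a sequence of routine Ken Brown and two-of-three manipulations.
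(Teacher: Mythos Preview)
Your proposal is correct and follows essentially the same line as the paper's proof: both reduce the question, for each $\sigma \colon i \to j$, to showing that the $\sharp$-map of $\hbar(Y)$ is an isomorphism in the homotopy category if and only if the underived adjoint $F_\sigma(\bar Y_i) \to Y_j$ (with $\bar Y_i$ a cofibrant replacement) is a weak equivalence, and both identify the key technical step as matching the derived adjoint with this underived map.

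The one organisational difference is that the paper transfers the problem along the equivalence $\Hof\C_j \simeq \Ho\C_j$ and works with the usual total derived functors $\LL F_\sigma$, $\RR U_\sigma$ on $\Ho\C$, citing \cite[9.7]{DS} to identify $k^\flat$ with $\gamma(\ell^\flat)$ directly in terms of~$Y$; this lets the paper avoid introducing the auxiliary replacement~$Y'$ and the subsequent Ken~Brown/two-of-three comparison between $Y$ and~$Y'$ that you carry out. Your route via $\LL_f F_\sigma$ on $\Hof\C_j$ and the replacement~$Y'$ is perfectly sound, just slightly longer. One small remark: your appeal to Corollary~\ref{LevelCofibrations} for pointwise cofibrancy in case~3 is specific to the $c$-structure, but in any event your argument never uses pointwise cofibrancy of~$Y'$ since you pass to $(Y'_i)^c$ anyway.
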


\begin{proof}
  Fix a morphism $\sigma \colon i \ra j $ of~$ \I$, and define $Z:=
  \hbar (Y)$.  By construction, $z_\sigma^\flat$ is a morphism
  in~$\Hof \C_i$ which is isomorphic, in~$\Ho \C_i$, to a morphism
  $k^\flat \colon Y_i \ra \RR U_\sigma (Y_j)$. The isomorphism is
  given by the fibrant replacement used in the construction
  of~$\hbar$. If~$Y_i$ and~$Y_j$ happen to be fibrant, the
  maps~$z_\sigma^\flat$ and~$k^\flat$ agree.

  There is a commutative diagram of categories and functors
  \begin{diagram}
    \Hof \C_i & \lla^{\RR_f U_\sigma} & \Hof \C_j \\
    \da && \da \\
    \Ho C_i & \lla_{\RR U_\sigma} & \Ho \C_j \\
  \end{diagram}
  where both vertical arrows are equivalences.
  Hence~$z_\sigma^\sharp$ is isomorphic, in the category~$\Ho \C_j$,
  to the adjoint $k^\sharp \colon \LL F_\sigma (Y_i) \ra Y_j$
  of~$k^\flat$.  In particular, the morphism~$z_\sigma^\sharp$ is an
  isomorphism if and only if~$k^\sharp$ is.

  Choose a cofibrant replacement $q_i \colon Y_i^c \rFib^\sim Y_i$
  of~$Y_i$ and a fibrant replacement $p_j \colon Y_j \rCof^\sim Y_j^f$
  of~$Y_j$.  Let~$\ell^\flat$ denote the composite map
  \[Y_i^c \rFib^\sim_{q_i} Y_i \lra^{y_\sigma^\flat} U_\sigma (Y_j)
  \lra_{U_\sigma (p_j)} U_\sigma (Y_j^f) \ .\]
  By the proof of \cite[9.7]{DS} we know that~$k^\flat$ is isomorphic
  to~$\gamma_i (\ell^\flat)$ where $ \gamma_i \colon \C_i \ra \Ho \C_i
  $ denotes the localisation functor.  Similarly, $k^\sharp$~is
  isomorphic to~$\gamma_j (\ell^\sharp)$, where~$\gamma_j$ denotes the
  localisation functor for~$ \C_j$, and~$\ell^\sharp$ is adjoint
  to~$\ell^\flat$. In particular, $k^\sharp$~is an isomorphism if and
  only if~$\ell^\sharp$ is a \weq{}. But~$\ell^\sharp$ factors as $
  F_\sigma (Y_i^c) \ra Y_j \rCof^\sim_{p_j} Y_j^f $ which shows
  that~$\ell^\sharp$ is a \weq{} if and only if the homotopy sheaf
  condition (``at~$\sigma$'') holds for~$Y$.

  The second assertion follows immediately since $\hbar$~maps \weqs{}
  to isomorphisms and the property of being a left strict sheaf is
  clearly invariant under isomorphism.
\end{proof}

The theorem applies, for example, to the category of non-linear
sheaves on projective $n$-space. Recall the adjunction
bundle~$\mathfrak{P}^n$ from~\ref{ProjSpaces}. This is an adjunction
bundle of model categories. The resulting category of homotopy sheaves
$\hshv \bigl(\langle n\rangle, \mathfrak{P}^n \bigr)$ is the
category~$\mathbf{P}^n$ of sheaves as defined in~\cite[6.3]{Hu1}.  The
index category~$\langle n\rangle$ is direct with degree function $d
(A) := \# A$.  Hence the $c$-structure exists.  Moreover, all objects
of $\Tw \bigl(\langle n\rangle, \mathfrak{P}^n\bigr)$ are $c$-fibrant.
Thus we can use the construction of Case~1 in
\S\ref{subsec:construction_of_h}, and Theorem~\ref{HomInv}
applies.---More generally, we can consider the category of non-linear
sheaves over any projective toric variety \cite{Hu2}, and
Theorem~\ref{HomInv} identifies non-linear sheaves (or rather,
homotopy classes of such) with ``sheaves in the homotopy category''.

\subsection*{Acknowledgements}

The authors have to thank \textsc{M.~Brun\/} for helpful comments and
suggestions. All diagrams were typeset with \textsc{P.~Taylor\/}'s macro
package \cite{T}.

\raggedright

\bibliographystyle{alpha}
\bibliography{twisted}

\end{document}